\newtheorem{theorem}{Theorem}[section]
\newtheorem{corollary}[theorem]{Corollary}
\newtheorem{lemma}[theorem]{Lemma}
\newtheorem{thdef}[theorem]{Theorem and definition}
\newtheorem{proposition}[theorem]{Proposition}
\newtheorem{propdef}[theorem]{Proposition and definition}
\theoremstyle{definition}
\newtheorem{definition}[theorem]{Definition}
\newtheorem{context}[theorem]{Context}
\theoremstyle{remark}
\newtheorem{remark}[theorem]{Remark}
\def\oge{\leavevmode\raise
.3ex\hbox{\(\scriptscriptstyle\langle\!\langle\,\)}}
\def\feg{\leavevmode\raise
.3ex\hbox{\(\scriptscriptstyle\,\rangle\!\rangle\)}}
\newcommand{\vep}{\alpha}
\newcommand{\vet}{\beta}
\renewcommand\leq{\leqslant}
\renewcommand\geq{\geqslant}
\newcommand{\NN}{\mathbb{N}}
\newcommand{\MM}{\mathbb{M}}
\newcommand{\PP}{\mathbb{P}}
\newcommand{\GL}{\mathbb{GL}}
\newcommand{\PGL}{\mathbb{PGL}}
\newcommand{\rh}{\mathrm{h}}
\newcommand{\rL}{\mathrm{L}}
\newcommand{\rN}{\mathrm{N}}
\newcommand{\rR}{\mathrm{R}}
\newcommand{\rZ}{\mathrm{Z}}
\newcommand{\Br}{\mathrm{Br}}
\newcommand{\Char}{\mathrm{char}}
\newcommand{\Id}{\mathrm{Id}}
\newcommand{\End}{\mathrm{End}}
\newcommand{\Int}{\mathrm{Inn}}
\newcommand{\Ker}{\mathrm{Ker}}
\newcommand{\Trd}{\mathrm{Trd}}
\newcommand{\Nrd}{\mathrm{Nrd}}
\newcommand{\Cp}{\mathrm{Cp}}
\newcommand{\Cprd}{\mathrm{Cprd}}
\newcommand{\Cpf}{\mathrm{Cpf}}
\newcommand{\Adeux}{\mathrm{A}_2}
\newcommand\diag[1] {#1_{{\mathrm{\textup{d}}}}}
\newcommand\ttt[1] {{\tt #1}}
\newcommand\cha[1]{#1}
\newcommand \gen[1] {\left\langle{#1}\right\rangle}
\newcommand \sotq[2]{\{\,#1:#2\,\}}
\newcommand \ov {\overline }
\newcommand \op{^{\textrm{\textup{op}}}}
\newcommand \conj[1] {#1^I }
\newcommand \som {\sum\nolimits}
\newcommand \ssi {if, and only if, }
\begin{document}

\title{Constructive \cha{basic} 
theory of central simple algebras}
\author{Thierry Coquand}
\affil{Computer Science and Engineering Department,
University of Gothenburg, Sweden,
\url{coquand@chalmers.se}
}
\author{Henri Lombardi}
\author{Stefan Neuwirth}
\affil{Université de Franche-Comté, CNRS, LmB, FR-25000 Besançon, France,
\url{henri.lombardi@univ-fcomte.fr},
\url{stefan.neuwirth@univ-fcomte.fr}
}
\date{\today}
\maketitle

\tableofcontents

\section*{Introduction}

Wedderburn's theorem (\citealp[Theorem 22]{Wedderburn}; \citealp[Theorem 3.9]{Albert}; \citealp[Theorem VI.2.1]{Curtis}; \citealp[Theorem IX.5.1]{MRR}) states that
any central simple algebra is a matrix algebra over a division algebra. It
is an early instance of a {\em nonconstructive}
result in algebra.\footnote{The standard quaternion algebra~\(A\) over a field~\(K\) is central simple. But if we cannot decide whether \(K\) contains a root of \(X^2+1\), we cannot decide whether \(A\) is a division algebra or  the matrix algebra \(\MM_2(K)\) \citep[Exercise IX.1.4]{MRR}.}
While less famous than Hilbert's basis theorem in this respect, it has also 
played an important role in the development of abstract structures in mathematics, such
as the use of modules satisfying the Noetherian or the Artinian property
in representation theory \citep[\S~VI.2]{Curtis}.
Although nonconstructive, it is also a basic result in the theory of central simple algebras,
and this might explain why the presentation of this theory in a constructive framework \citep[\S~IX.5]{MRR}
stops essentially at giving possible weaker versions of this result that are valid constructively.

The main goal of this paper is to show that using in an essential way ideas
from dynamical algebra \citep{CM,CACM} we can go beyond the results of \citealt{MRR} and give a dynamical version of the splitting theorem that characterises a central simple finite-dimensional algebra~\(A\) over a field~\(F\) as one that becomes a matrix algebra
by a faithfully flat scalar extension
(Theorem~\ref{th3}) and as one such that \(A\otimes A\op\) is a matrix algebra (Theorem~\ref{th2}).
We use this version to establish  constructively most of the basic results of the theory of central simple
algebras. For instance, we show in a constructive setting\footnote{This implies
   that these results are valid in an arbitrary sheaf topos.}
the Skolem-Noether theorem  \citep{Skolem,Noether} (Theorem~\ref{thSkNo}), or the fact that the dimension of a central simple algebra
is a square.

We illustrate these results by giving, in Section~\ref{secBecher}, an elementary constructive
proof\footnote{It uses induction up to \(\omega^{\omega}\) and so is not primitive
  recursive; one should be able to use the work \citealt{CMR} for
  a proof-theoretic analysis of the argument.}
of a theorem by \citet{Becher} (Theorem \ref{thBecher}) which is a consequence of a celebrated
theorem by Merkurjev.\footnote{It seems that the proof of this theorem in the reference \citealt{Merkurjev} is
already essentially constructive, but we leave this for future work.}

This paper is written in Bishop's style of constructive mathematics   \citep{Bi67,MRR,CACM}.
Classical sources we have used are \citet{Albert,Baer,Bla,scharlau,Becher2004,GS,Becher}.

\section{Preliminaries}

\subsection{Matrix decomposition of a ring}


When can a ring~\(A\) be represented as a ring of matrices \(\MM_n(R)\)? Such a ring has the natural basis of ``elementary matrices''~\(E_{ij}\) whose only nonzero entry is~\(1_R\) in position~\((i,j)\).

Let \(R\)~be a ring (not necessarily a commutative one). Consider the free right \(R\)-module~\(V=\alpha_1R + \dots + \alpha_nR\) and let \(A=\End_R(V)\).
Then the coordinate projections~\(e_i\) on~\(V\) defined by \(e_i\alpha_j = \alpha_i\delta_{ij}\) form a system of orthogonal idempotents that
sum to~\(1_A\): \(e_ie_j=\delta_{ij}e_j\) and \(e_1+\dots+e_n=1_A\). The maps~\(e_{ij}\) on~\(V\) defined by \(e_{ij} \alpha_k = \alpha_i\delta_{jk}\)
are elements of~\(A\) subject to the equations
\begin{equation}
  \begin{cases}
    e_{ij}e_{jl} = e_{il}\text,\\
    e_{ij}e_{kl} = 0\text{ if \(j\ne k\),}\\
    e_{1}+\dots+e_{q}=1_A\text{ with \(e_i=e_{ii}\).}
  \end{cases}\label{eq:1}
\end{equation}
If we represent elements of~\(A\) by matrices, the~\(e_{ij}\) become the matrices~\(E_{ij}\). 

Conversely, consider a ring~\(A\).
A sequence of elements~\(e_{ij}\) (\(1\leq i,j\leq n\)) subject to~\eqref{eq:1} is a \emph{matrix decomposition} for~\(A\).\footnote{If  $n\geq 2$, \(A\) is noncommutative.}
We write ~\(A_{ij}=e_iAe_j\). They allow to retrieve a ring~\(R\) as above in two equivalent ways.
\begin{itemize}
\item Let~\(R=A_{11}\). We see that $R$ is a ring with unit $e_1$ and that~\(V=Ae_1\) is a right \(R\)-module. The map \(a\mapsto a_{ij}=e_{1i}ae_{j1}\) vanishes on all~\(A_{kl}\)
  with \((k,l)\ne(i,j)\) and defines a bijection of~\(A_{ij}\) with~\(R\).
  We have \(V = e_{11}R + \dots + e_{n1}R\) with basis \((e_{11},\dots,e_{n1})\).
  Let \(a\in A\); the map \(v\mapsto av\) defines an element of \(\End_R(V)\) represented
  by the matrix \((a_{ij})_{i,j}=\som_{i,j}a_{ij}E_{ij}\), so that  \(A\)~can be seen as an \(R\)-algebra isomorphic to~\(\End_R(V)\) and to~\(\MM_n(R)\).\footnote{The reader can write the corresponding ring morphism from $R$ to $A$.}
\item Recall that the centraliser of a subset~\(B\) of~\(A\) is the subring \(\rZ_A(B)=\sotq{x\in A}{\forall y \in B,\,xy=yx}\).  Let~\(R=\diag{A}:=\rZ_A(\{e_{ij},\,1\leq i,j\leq n\})\): \(R\) consists of those~\(a\in A\) that are diagonal and have constant diagonal with respect to the~\(e_{i}\). Then \(A\) is an \(R\)-algebra.
  The map \(a\mapsto\som_he_{hi}ae_{jh}\) vanishes on all~\(A_{kl}\) with \((k,l)\ne(i,j)\) and defines an $R$-module isomorphism of~\(A_{ij}\) with~\(R\). Moreover, each \(A_{ii}\) is a ring with unit element $e_i$ and the $R$-module isomorphism is also a ring isomorphism.
\end{itemize}

We thus have the following lemma.
\begin{lemma}[matrix decomposition] \label{lemBasic2}
Assume that a ring~\(A\) has a matrix decomposition \((e_{ij})_{1\leq i,j\leq n}\). 
Let \(A_{ij}=e_iAe_j\) and $\diag{A}=\rZ_A(\{e_{ij},\,1\leq i,j\leq n\})\). Consider \(V=Ae_1\): it is a free right $A_{11}$-module as well as a free right $\diag{A}$-module with basis $(e_{11},\dots,e_{n1})$.
\begin{enumerate}
\item  $A_{11}$ is a ring with unit~\(e_1\), we define  $A_{11}$-module isomorphisms \(A_{ij}\simeq A_{11}\) for each~\(i,j\), and this gives a ring isomorphism \(A\simeq\End_{A_{11}}(V)\simeq \MM_n(A_{11})\).
\item $\diag{A}$ is a subring of $A$ and $A\simeq\End_{\diag{A}}(V)\simeq\MM_n(\diag{A})$, alternatively the subalgebra of \(A\) of elements commuting
  with all \(e_{ij}\).
\end{enumerate}
\end{lemma}
%

\subsection{Basic definitions}

We first explain what we mean by a central simple algebra and provide some definitions.

\begin{itemize}
%
\item We say that a vector space~\(V\) over a field is \emph{strictly finite} when we know a finite basis of~\(V\).
\item By \emph{algebra} we henceforth always mean an associative unital nonzero algebra~\(A\) over a field~\(F\). Such an \(F\)-algebra is given with a ring morphism \(F\to A\). 
The field~\(F\) is supposed to be discrete: every element is 0 or invertible. So the morphism \(F\to A\) is injective. We identify \(F\) with its image \(F1_A\subseteq A\). Moreover, the algebra~\(A\) will always be a strictly finite vector space over~\(F\), with a basis \((\vep_1,\dots,\vep_m)\),
and we shall always take \(\vep_1\) to be the unit \(1_A\) of the algebra.
The algebra is given by a {\em multiplication table} \[\vep_i\vep_j = \sum_{p=1}^m c^p_{ij} \vep_p\text,\] where the coefficients~\(c^p_{ij}\in F\) are called the \emph{structure constants} of~\(A\) with respect to the basis \((\vep_1,\dots,\vep_m)\).
\item  By \emph{left ideal} of~\(A\), we mean a sub-\(F\)-vector space of~\(A\) with a given basis which is a left \(A\)-module.\footnote{In other words, we consider usual left ideals with an explicit \(F\)-basis, possibly \(\emptyset\).} A left ideal is  \emph{proper} when it is \(\neq A\).
An element \(a\in A\) generates a left ideal \(Aa=\sotq{xa}{x\in A}\), a right ideal \(aA=\sotq{ax}{x\in A}\), and an ideal \(\gen{a}=AaA=\sotq{\som_{i=0}^n x_ia y_i}{n\in\NN, \,x_i,y_i\in A}\).
\item If \(B\) is an \(F\)-algebra, the \emph{commutative direct sum} \(B\otimes_F A\) is constructed as the free left \(B\)-module generated by \(\vep_1,\dots,\vep_m\) with the multiplication table as above.\footnote{When~\(A\) and~\(B\) are \(F\)-algebras, a commutative direct sum of~\(A\) and~\(B\) is a solution of the following universal problem: to find an \(F\)-algebra~\(C\) and two morphisms \(u\colon A\to C\) and \(v\colon B\to C\) such that \(u(a)v(b)=v(b)u(a)\) for all \(a,b\); moreover,
when we have two morphisms \(\varphi\colon A\to D\) and \(\psi\colon B\to D\) such that \(\varphi(a)\psi(b)=\psi(b)\varphi(a)\) for all \(a,b\), then there is a unique morphism \(\theta\colon C\to D\) such that \(\theta u=\varphi\) and  \(\theta v=\psi\).
A solution of this universal problem is \(C=A\otimes_F B\)  with the embeddings \(u\colon A\to C,\, a\mapsto a\otimes 1_B\) and 
 \(v\colon B\to C,\,b\mapsto 1_A\otimes b\), and the multiplication law in~\(C\) is defined by \((a\otimes b)\cdot(a'\otimes b')=(aa')\otimes (bb')\).} We shall also use the notation~\(A_B\).

In consequence, if  \((\vet_1,\dots,\vet_n)\) is an \(F\)-basis of~\(B\), the formal products \(\vet_k\vep_i\) form a basis of \(B\otimes_F A\), with \(\vep_i  \vet_k=\vet_k\vep_i\), and by \(F\)-bilinearity we have
\[
\hbox{if } \vep_i\vep_j = \som_{p=1}^m c^p_{ij} \vep_p \hbox{ and }
 \vet_k\vet_l = \som_{q=1}^n d^q_{kl} \vet_q ,\hbox{ then }  (\vet_k\vep_i)(\vet_l\vep_j) = \som_{p=1}^m  \som_{q=1}^n c^p_{ij} d^q_{kl} (\vet_q\vep_p).
 \]
If \(A=\MM_q(F)\), then \(B\otimes_F\MM_q(F)\simeq \MM_q(B)\).\\
As an \(F\)-vector space, \(B\otimes_F A\) is the tensor product of~\(B\) with~\(A\); we shall use the usual terminology stating that the algebra \(B\otimes_FA\) is the tensor product of the algebras~\(B\) and~\(A\).
\item  By  \emph{subalgebra} of an algebra~\(A\) we intend an \(F\)-algebra~\(B\)
with an injective morphism \(B\to A\);\footnote{In other words, we consider usual subalgebras which are strictly finite over~\(F\).} if we identify~\(B\) with its image in~\(A\),  we have \(1_B=1_A\).  The \(F\)-basis of~\(A\) can be modified so that the \(F\)-basis of~\(B\) is an initial subsequence of the \(F\)-basis of~\(A\). The algebra~\(A\) has a natural structure of left \(B\)-module. When \(A\) is a free left \(B\)-module with basis \((c_1,\dots,c_r)\), we write \(r=\dim_B(A)=[A:B]\) and we have \([A:F]=[A:B][B:F]\).
\item  If \(B\) is a subalgebra of an algebra~\(A\), the centraliser~\(\rZ_A(B)\) is a subalgebra: by the constructive theory of vector spaces over a discrete field, we can compute a basis of this centraliser. The subalgebra \(\rZ_A(A)=\rZ(A)\) is called the \emph{center} of~\(A\). 
\item  That \(A\) is {\em central} means that \(\rZ(A)=F1_A\): if \(xy = yx\) for all~\(y\) then \(x = r1_A\)
for some~\(r\) in~\(F\). In other words, 
if \(x\notin F1_A\) then we can find~\(y\) such that \(xy\neq yx\) (because we can compute the kernel of the linear map \(y\mapsto xy-yx\)). 
\item  That \(A\) is {\em simple} can be expressed by stating that if \(a\neq 0\) then \(AaA=A\); in other words we can find
\(x_i\)'s and \(y_i\)'s such that \(\sum_ix_iay_i = 1_A\). Note that a simple commutative \(F\)-algebra is a finite field extension of~\(F\).
\item We use the following notation: \(\End_F(A)\) is the algebra of endomorphisms of the vector space~\(A\); \(\End_{A/F}(A)\) is the monoid of endomorphisms of the \(F\)-algebra~\(A\). The vector space \(\End_F(A)\) is isomorphic to \(\MM_m(F)\) through the basis \((\vep_1,\dots,\vep_m)\).
\item 
We denote by \(\rL_a^{\!A}\) or \(\rL_a\) the \(F\)-linear endomorphism of~\(A\) defined by \(\rL_a x = ax\). Similarly \(\rR_b^Ax =\rR_bx = xb\).
These maps embed~\(A\) and \(A\op\) into \(\End_F(A)\). 
We have \(\rL_a\rR_b = \rR_b\rL_a\).
This provides a natural map \(A\otimes_F A\op\rightarrow\End_F(A)\simeq \MM_m(F)\). When the \(F\)-basis of~\(A\) is clear from the context we say that we consider the canonical map \(A\otimes_F A\op\rightarrow \MM_m(F)\).  
\item  Note that if \(\varphi\in\End_F(A)\) satisfies \(\varphi \rL_a = \rL_a \varphi\) for all \(a\in A\),  then \(\varphi\) is \(\rR_b\) with \(b = \varphi(1)\). 
So, with the embeddings of the previous item, \(A\op\) is the centraliser of~\(A\) in \(\End_F(A)\) and  
\(A\) is the centraliser of \(A\op\) in \(\End_F(A)\).
\item  \(A\) is \emph{split by} a commutative \(F\)-algebra~\(K\) if the \emph{scalar extension} \(A_K=K\otimes_F A\) is isomorphic to a matrix algebra \(\MM_q(K)\) (recall that \(K\) is supposed to be nonzero); we say that \(K\) \emph{splits}~\(A\). This definition allows us to develop the basic theory of central simple algebras constructively. It is equivalent in classical mathematics to the classical one, i.e.\ \(K\) can be replaced by a finite field extension: in fact, if we know  a field quotient \(K/J=L\) of~\(K\), we have 
  \(A_L=L\otimes_F A\simeq L\otimes_K A_K\simeq L\otimes_K\MM_q(K)\simeq \MM_q(L)\).
  If the ground field~\(F\) itself splits~\(A\), i.e.\ if \(A\)~is isomorphic to an~\(\MM_q(F)\), we simply say that \(A\) is \emph{split}. 
\end{itemize}

\subsection{Basic examples}
\label{sec:examples}

\begin{itemize}
\item A matrix algebra \(\MM_q(F)\) is central simple. 
\item If \(B\) is central simple, then \(\MM_q(B)\) is central simple.  
\item  An \(F\)-algebra~\(D\) which is a division ring is simple. Its center~\(K\) is a finite field extension of~\(F\) and the \(K\)-algebra~\(D\) is called a \emph{division algebra}.
\item Recall also that \(\MM_q(\MM_r(B))\simeq \MM_{qr}(B)\).   
\end{itemize} 

The founding example of a central simple algebra that is not a matrix algebra is the quaternion algebra introduced by William Rowan Hamilton in 1843: it is
generated by two elements \(x\) and \(y\)
satisfying \(x^2 =  y^2 = -1\) and \(yx = -xy\). \citealt{Dickson14} generalised this example to the notion of \emph{cyclic algebra}.

Let \(F\) be a discrete field, \(P\) in \(F[X]\) monic irreducible of degree \(n > 0\), and \(L = F[X]/P=F[x]\), where \(P(x) = 0\),
with basis \((1,x,\dots,x^{n-1})\) over \(F\). We assume that \(L\) is a cyclic extension of \(F\): we have an \(F\)-automorphism \(\sigma\)
of \(L\) such that \(\sigma^n(x) = x\) and \(\sigma^i(x) \neq x\) if \(0<i<n\).
We consider the algebra \(A\) of dimension \(n^2\) generated by the formal products \(x^iy^j,~i<n,~j<n\) subject to
\(yxy^{-1} = \sigma(x)\) and \(y^n = a\), where \(a\) is a given element in \(F^{\times}\).

 The quaternion algebra, over e.g.\ the field of rationals, is the special case where \(n=2\), \(P(X)=X^2+1\), and \(a = -1\).

\begin{proposition}
  The \(F\)-algebra \(A\) is central and simple.
\end{proposition}

\begin{proof}
  An element of \(A\) is of the form \(u = l_0+l_1y+\dots+l_{n-1}y^{n-1}\) and we have \(x u x^{-1} = l_0+ l_1 (x/\sigma(x)) y+ \dots+l_{n-1} (x/\sigma^{n-1}(x)) y^{n-1}\).
So, if \(u\) is in the center of~\(A\), we should have \(l_i = 0\) for \(0<i<n\), so \(u\) is in \(L\). But then \(yuy^{-1} = \sigma(u)\) and \(u\) is in \(F\).
This shows that \(A\) is {\em central}.

In order to show that \(A\) is simple, let \(I\) be a left and right ideal and let \(u\) be an element in \(I\) with \(u\neq 0\).
We write as before \(u = l_0+l_1y+\dots+l_{n-1}y^{n-1}\). Let \(N(u)\) be  the number of
coefficients \(l_i\) of \(u\) that are \(\neq 0\). We show that \(1_A\) is in \(I\) by induction on \(N(u)\).

If we have \(l_{p} \neq 0\) and \(l_i = 0\) for \(i<p\), then \(v = (l_py^p)^{-1}u\) is also in \(I\)
and is of the form
\(v = 1_A + \sum_{i>0}m_i y^i\). If some \(m_i\) are \(\neq 0\), we consider \(xvx^{-1} = 1_A + \sum_{i>0} m_i (x/\sigma^i(x)) y^i\)
which is also in \(I\). Then  \(u' = v - xvx^{-1}\) is an element of~\(I\) and we have \(N(u')<N(u)\).
\end{proof}

\subsection{Some general results about algebras}
\label{sec:some-general-results}

Recall that all \(F\)-algebras are supposed to be finite-dimensional vector spaces.
\begin{lemma} \label{lemAlg4} Let \(A\) be an \(F\)-algebra. An endomorphism of the vector space~\(A\) is injective \ssi it is surjective. An element~\(a\in A\)
is left-regular \ssi it is right-invertible \ssi it is invertible.
\end{lemma}

\begin{proof}
The endomorphism \(x\mapsto ax\) of the finite-dimensional vector space~\(A\) is injective \ssi it is surjective \ssi the endomorphism \(x\mapsto xa\) is surjective.
\end{proof}  

An invertible element~\(u\) of an \(F\)-algebra~\(A\) defines an \emph{inner \(F\)-automorphism of}~\(A\),
denoted by \(\Int(u)\) and defined by \(\Int(u)(a)=uau^{-1}\) for all \(a\in A\).

\begin{proposition}\label{prop5}
Let \(\sigma\) be an \(F\)-automorphism of \(\MM_q(F)\). There is an~\(u\) in \(\GL_q(F)\) such that \(\sigma=\Int(u)\).
  \end{proposition}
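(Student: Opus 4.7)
\medskip

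\noindent\textbf{Proof proposal.} The plan is to reduce to the standard action of $\MM_q(F)$ on $F^q$: since $\sigma$ is an automorphism, the standard matrix units $e_{ij}$ of $\MM_q(F)$ (satisfying $e_{ij}e_{kl}=\delta_{jk}e_{il}$ and $\sum_i e_{ii}=1$) map to a second system of matrix units $f_{ij}:=\sigma(e_{ij})$ obeying the same relations. I will exhibit a basis $(v_1,\dots,v_q)$ of $F^q$ that intertwines the standard action with the $\sigma$-twisted one; the change-of-basis matrix will then be the desired $u\in\GL_q(F)$.

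Since $\sigma$ is injective we have $f_{11}\ne 0$, so viewed as a linear endomorphism of $F^q$, $f_{11}$ has at least one nonzero column, and discreteness of $F$ lets us effectively pick a standard basis vector $v$ of $F^q$ with $f_{11}v\ne 0$. Set
\[
v_i:=f_{i1}\,v\qquad(1\le i\le q).
\]
The matrix-unit relations give $f_{ij}v_k=f_{ij}f_{k1}v=\delta_{jk}f_{i1}v=\delta_{jk}v_i$.

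Linear independence of $(v_1,\dots,v_q)$ follows at once: if $\sum_i c_iv_i=0$, applying $f_{1j}$ yields $c_jf_{11}v=0$, hence $c_j=0$. Thus $(v_1,\dots,v_q)$ is a basis of $F^q$, and I would define $u\in\GL_q(F)$ by $u(e_k)=v_k$ on the standard basis $(e_k)$ of $F^q$.

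To conclude, I check $u e_{ij}=f_{ij}u$ on the basis $(e_k)$: both sides send $e_k$ to $\delta_{jk}v_i$. Hence $\sigma(e_{ij})=ue_{ij}u^{-1}$ for every $i,j$, and by $F$-linearity $\sigma=\Int(u)$ on all of $\MM_q(F)$. There is essentially no obstacle here; the only point where constructivity intervenes is the effective choice of the vector $v$, which is straightforward because $F$ is discrete, and hence the nonzero matrix $f_{11}$ exhibits a computable nonzero column.
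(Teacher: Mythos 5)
Your proof is correct and constructive, but it takes a genuinely different route from the paper's. The paper follows \v{S}emrl's argument: pick $y_0,y,z$ with $\sigma(y_0y^T)z\neq 0$, define $u$ directly by $ux=\sigma(xy^T)z$, observe that multiplicativity of $\sigma$ immediately gives the intertwining identity $ua=\sigma(a)u$, and then check that $u$ is surjective by exhibiting preimages. Your argument instead transports the standard matrix units $e_{ij}$ to the second system $f_{ij}=\sigma(e_{ij})$, forms the orbit $v_i=f_{i1}v$ of a well-chosen vector, verifies that this is a basis on which the $f_{ij}$'s act by the standard formulas, and takes $u$ to be the change of basis. This is the classical ``two systems of matrix units in $\MM_q(F)$ are conjugate'' proof, and it is in fact the special case of the argument the paper gives later in Theorem~\ref{thBr1}: there two matrix algebra decompositions $(e_{ij})$ and $(f_{ij})$ of a central simple algebra $C$ are conjugated by an explicit $g$, and the paper notes after the statement that Proposition~\ref{prop5} is exactly the degenerate case $A=B=F$. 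So your route anticipates the paper's own later generalisation and makes the module-theoretic content (uniqueness of the simple $\MM_q(F)$-module) more visible, while \v{S}emrl's proof is a slightly slicker one-shot computation that avoids matrix-unit bookkeeping. Both are equally constructive; in each, discreteness of $F$ is used only to locate a nonzero vector.
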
  
In fact, \(u\) is well-defined up to a unit of~\(F\) since \(uau^{-1}=vav^{-1}\) for all~\(a\) means that \(v^{-1}u\) is in the center \(F1_{\MM_q(F)}\) of \(\MM_q(F)\).
So the group of automorphisms of \(\MM_q(F)\) is isomorphic to the projective linear group~\(\PGL_q(F)\), i.e.\ the quotient of the general linear group~\(\GL_q(F)\) by the nonzero scalar matrices.


\begin{proof}
  Let \(e_{ij}=\sigma(E_{ij})\), \(1\leq i,j\leq q\): then the~\(e_{ij}\) form a matrix decomposition for~\(\MM_q(F)\), so that the \(e_{ii}=e_i\) form a system of nonzero commuting orthogonal idempotents with \(e_{1}+\dots+e_{q}=1\). Consider the nonzero vector spaces~\(V_i=e_i(F^q)\): we have \(v\in V_i\) \ssi \(e_iv=v\); the map \(v\mapsto e_{ji}v\) vanishes on~\(V_k\), \(k\ne i\), and has range~\(V_j\): it defines an isomorphism of~\(V_i\) onto~\(V_j\), so that the~\(V_i\) all have the same dimension and decompose~\(F^q\) into their direct sum. Now take a nonzero \(v_1\in V_1\) and let \(v_j=e_{j1}v_1\in V_j\); note that \(e_{1j}v_j=e_{1j}e_{j1}v_1=e_{1}v_1=v_1\), so that~\(v_j\ne0\). Then \(e_{ij}v_{j}=e_{i1}e_{1j}v_j=e_{i1}v_1=v_i\) and \(e_{ij}v_k=0\) if \(k\ne j\): take the inverse~\(u\) of the matrix whose columns are the \(v_j\)'s.
\end{proof}

\begin{remark} \label{remAutGLnK} 
  This proof is different from the original proof by \citet[Satz~40]{Skolem}. There is yet another proof by \citet{Semrl}.
  The automorphism group of \(\GL_q(F)\) and other ``classical groups'' for a (skew) field~\(K\) is a classical topic \citep[see e.g.][]{Dd71}.  
\end{remark}

The same argument works for an arbitrary (commutative) ring~\(R\) instead of the field~\(F\) whose ``Picard group''~\(\operatorname{Pic}(R)=0\), i.e.\ such that the projective \(R\)-modules of finite type and of rank~\(1\) are free: then we can consider an \(R\)-basis for \(V_1\) and proceed with their images by~\(e_{j1}\). This yields
\begin{proposition}\label{prop5anneau}
Let \(\sigma\) be an automorphism of the algebra of matrices~\(\MM_q(R)\) with coefficients in a ring~\(R\) such that \(\operatorname{Pic}(R)=0\). There is an~\(u\) in \(\GL_q(R)\) such that \(\sigma=\Int(u)\).
  \end{proposition}

\section{Central simple algebras: basic results}

  \citet[Theorem 22]{Wedderburn} proved that any simple algebra is isomorphic to  \(\MM_q(D)\)
  for a suitable division algebra~\(D\).
  The proof uses the law of excluded middle and we shall prove constructively a
  dynamical form\footnote{In \citealt{Gonthier}, the problem with nonconstructiveness  is solved by using a double-negation translation. It would be interesting to connect  these two a priori different methods, but we leave this for future work.}
  of this result (Theorem~\ref{th1}).

\medskip An important consequence of Wedderburn's theorem is that an algebra~\(A\) is central simple \ssi A becomes a matrix algebra after a suitable scalar extension, \ssi \(A\otimes _F A\op\) is a matrix algebra (Theorem~\ref{th2} and Proposition \ref{prop4}). Also, the important corollaries~\ref{corth2} and \ref{cor2th2} seem to have no simple elementary proof. 




\subsection{Wedderburn's theorem}

\begin{proposition}\label{prop2}
If \(A\) is simple and has a nonzero nonregular element \(a\), then \(Aa\) contains a nontrivial idempotent.
\end{proposition}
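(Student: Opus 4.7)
The plan is to induct on $\dim_F L$ using right multiplication by a carefully chosen element of $L$. For any $a\in L$, the map $R_a\colon x\mapsto xa$ sends $L$ into itself, since $la = l\cdot a\in A\cdot a\subseteq L$ by the left-ideal property applied to $a\in L$. The induction is driven by a preliminary claim that $L\cdot L\neq 0$: because $L\neq 0$ and $A$ is simple, $ALA=A$, so $1=\sum_i x_il_iy_i$ with $l_i\in L$; if we had $L^2=0$, then for every $l\in L$ each term $x_i(l_i(y_il))$ would vanish (since $y_il\in AL\subseteq L$ forces $l_i(y_il)\in L\cdot L=0$), and summing yields $l=1\cdot l=0$, contradicting $L\neq 0$. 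Inspecting products of basis elements of $L$ then produces $x,y\in L$ with $xy\neq 0$, and setting $a:=y$ guarantees $La\neq 0$.

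With such an $a$ in hand, $R_a\colon L\to L$ is an $F$-linear endomorphism of a finite-dimensional space, so one can decide whether it is surjective. If it is, then by finite-dimensionality it is also injective, and there is a unique $e\in L$ with $ea=a$. The identity $R_a(e^2)=e(ea)=ea=a=R_a(e)$ together with injectivity of $R_a$ gives $e^2=e$, and $e$ is nontrivial: $ea=a\neq 0$ forces $e\neq 0$, and $e\in L\subsetneq A$ forces $e\neq 1$. Otherwise $La$ is a proper $F$-subspace of $L$, so $\dim_F La<\dim_F L$; moreover $La$ is itself a nonzero proper left ideal of $A$ (closure: $b(la)=(bl)a\in La$ for $b\in A$), so the induction hypothesis furnishes a nontrivial idempotent in $La\subseteq L$. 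The base case $\dim_F L=1$ is subsumed in the surjective subcase, since $La\neq 0$ then forces $La=L$.

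The main conceptual obstacle is the choice of $a$: a single nonzero element of $L$ may well be nilpotent, and in that case Lemma~\ref{lemAlg4} applied to it delivers only the trivial idempotent $0$, so one cannot finish locally from a single element. Exploiting simplicity globally to rule out $L^2=0$ is precisely what produces an $a\in L$ with $La\neq 0$, and from there the inductive dichotomy ``$R_a$ surjective versus $R_a$ not surjective'' closes the argument without any further appeal to Lemma~\ref{lemAlg4}.
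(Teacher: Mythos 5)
Your proof is correct, and it takes a genuinely different route from the paper's. The paper first reduces to a principal ideal $L=Aa$, applies Lemma~\ref{lemAlg4} to the minimal polynomial of~$a$ to produce an idempotent, and only when that idempotent is~$0$ (so $a$ is nilpotent, WLOG $a^2=0$) invokes simplicity to find $b$ with $aba\neq 0$; it then runs a somewhat intricate case analysis with $a_1=aba$ and $e=bxa$, inducting on $\dim Aa$. You instead prove $L^2\neq 0$ globally from simplicity (via $ALA=A$ and $AL\subseteq L$), extract $a\in L$ with $La\neq 0$ by inspecting basis products, and then exploit the decidable dichotomy ``$R_a\colon L\to L$ surjective or not'': surjectivity forces injectivity, so the solution $e$ of $ea=a$ is an idempotent, while non-surjectivity yields a strictly smaller nonzero proper left ideal $La$ for the induction. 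Your route bypasses Lemma~\ref{lemAlg4} entirely and has a cleaner inductive structure with a single split point; the paper's route explicitly isolates the nilpotent case, which is more in the spirit of the classical treatments it follows, but at the cost of more case analysis and ad hoc algebraic manipulation. Both proofs rest on the same constructive primitives: decidability of linear-algebraic questions (rank, equality of subspaces) over a discrete field.
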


\begin{proof}
  The hypothesis means simply that we have a nonzero \(a\in A\) such that \(Aa\neq A\). The conclusion will be that \(Aa\) contains a nonzero idempotent. 

There is a \(b\in A\) such that \(aba\ne0\): as \(AaA=A\), we can find a finite number of elements \(x_i\)'s and \(y_i\)'s in~\(A\) such that \(\sum x_iay_i=1_A\); squaring this yields \(\sum x_iay_ix_jay_j\ne0\), so that at least one of the \(ay_ix_ja\)'s is nonzero. If \(Aaba\neq Aa\), we replace~\(a\) by~\(aba\) with~\(Aaba\) strictly contained in~\(Aa\) and we are done by induction on the dimension of~\(Aa\). So we may assume that \(Aaba= Aa\) and \(a=xaba\) for some~\(x\in A\), so that \(u:=ba=bxau\). Let \(e=bxa\): \(e\ne0\) and \(e\in Aa\), so that \(e\ne1_A\); the map \(c\mapsto cu\) on the finite-dimensional vector space~\(Aa\) is surjective and therefore injective, and as \(e^2u=ebxau=eu\), we have \(e^2=e\). 
 \end{proof}  

The next result (Theorem \ref{th1}) is  a constructive/dynamical way to formulate the classical fact
that if~\(A\) is simple then \(A\) can be written 
\(B\otimes_F\MM_q(F)\simeq \MM_q(B)\), where \(B\) is a division
algebra (i.e.\ the classical formulation of Wedderburn's theorem).
Let us start with an essential remark. 

\begin{lemma} \label{lemBasic1} Let \(A\) be a simple \(F\)-algebra.
If \(e\) is a nontrivial idempotent of~\(A\), then \(eAe\) is again a simple \(F\)-algebra with~\(e\) as unit element (note that it is not a subalgebra of~\(A\)).
\end{lemma}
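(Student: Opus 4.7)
The plan is to proceed in two stages: first verify that $eAe$ is a legitimate $F$-algebra in the sense of this paper (strictly finite, unital, nonzero), and then verify the simplicity criterion that every nonzero element of $eAe$ generates the whole of $eAe$ as a two-sided ideal.

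For the first stage, I would observe that the map $\pi\colon A\to A$, $x\mapsto exe$, is $F$-linear, and $eAe$ is its image. Since $A$ is strictly finite over the discrete field $F$, the constructive theory of vector spaces lets us compute a basis of $\pi(A)=eAe$, so $eAe$ is strictly finite. Closure under multiplication is immediate: $(eae)(ebe) = e(aeb)e \in eAe$. The element $e$ acts as a unit, since any $eae \in eAe$ satisfies $e(eae)=(eae)e=eae$ using $e^2=e$. Finally, $eAe$ is nonzero because $e\neq 0$ (as $e$ is a nontrivial idempotent).

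For the second stage, let $b\in eAe$ with $b\neq 0$. I want to show that $(eAe)b(eAe)=eAe$. By simplicity of $A$, we can write $1 = \sum_i x_i b y_i$ with $x_i,y_i\in A$. Sandwiching by $e$ and using $b=ebe$, we compute
$$
e \;=\; e\cdot 1\cdot e \;=\; \som_{i} e x_i b y_i e \;=\; \som_i (ex_ie)\, b\, (ey_ie),
$$
where the last equality uses $b=ebe$ together with $e^2=e$ to absorb the extra $e$'s into the outer factors. Since each $ex_ie$ and $ey_ie$ lies in $eAe$, this exhibits $e$ as an element of $(eAe)b(eAe)$. For any $c\in eAe$, multiplying on the left by $c$ then yields $c = ce = \sum_i (c\cdot ex_ie)\,b\,(ey_ie) \in (eAe)b(eAe)$, which gives the required equality $(eAe)b(eAe)=eAe$.

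There is no genuine obstacle here; the only point requiring a little attention is the bookkeeping that $b\in eAe$ (so $b=ebe$) is precisely what makes the conjugates $ex_iby_ie$ collapse into elements of $(eAe)b(eAe)$ rather than merely of $AbA$. Once this observation is made, simplicity of $eAe$ is read directly off simplicity of $A$.
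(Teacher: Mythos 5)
Your proof is correct and follows essentially the same route as the paper: show that $eAe$ is closed under the operations with unit $e$, then for a nonzero $b\in eAe$ take a relation $\sum_i x_i b y_i = 1$ in $A$ and sandwich by $e$, using $b=ebe$ to land the factors in $eAe$. The only addition is your (welcome but routine) remark that $eAe$ is strictly finite as the image of a computable linear map.
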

%
\begin{proof}
  If we let \(A'=eAe=\sotq{x\in A}{x=exe}\), then \(A'\) is stable by addition and multiplication, and \(e\) is a unit element in \(A'\).
Moreover if \(x\neq 0\) in \(A'\) we write \(\sum_iy_ixz_i = 1\) in~\(A\), which gives
\(\sum_i(ey_ie)x(ez_ie)= \sum_ie(y_i xz_i)e= e\) in \(A'\). 
\end{proof}

Furthermore \(eAe\subsetneq A\) if \(e\neq 1\). 
Note that letting \(f=1-e\) we get a direct sum of  \(F\)-subspaces  \(A=eAe\oplus eAf\oplus fAe\oplus fAf\).
By Proposition \ref{prop2}, if we find a noninvertible element in \(eAe\) then we can split~\(e\) into smaller idempotents.

\smallskip

\begin{theorem}[Wedderburn's theorem, constructive form]\label{th1}
  Let \(A\) be a simple \(F\)-algebra. If \(A\) contains a nonzero noninvertible element, then \(A\) can be written as \(\MM_q(B)\),
  where \(q>1\) and \(B\) is a simple \(F\)-algebra.   Moreover if~\(A\) is central simple, then so is~\(B\).
\end{theorem}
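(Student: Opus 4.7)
The plan is to extract a nontrivial idempotent from the given data, then iteratively build a matrix algebra decomposition of $A$, and finally apply Lemma \ref{lemBasic2}.

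First, I produce an initial nontrivial idempotent $e \in A$. By Lemma \ref{lemAlg4} applied to the nonzero noninvertible element $a$, the associated idempotent $e \in F[a]$ satisfies $e \ne 1$ (since $a$ is not invertible), so either $e$ is itself a nontrivial idempotent, or $e = 0$ and $a$ is nilpotent. In the latter case $Aa$ is a nonzero proper left ideal of $A$, and Proposition \ref{prop2} yields a nontrivial idempotent in $Aa$. Either way we obtain a nontrivial idempotent $e$, and set $f = 1 - e$.

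The crux of the argument, and the main obstacle, is to refine the orthogonal pair $(e,f)$ into a full matrix algebra decomposition $(e_{ij})_{1 \le i,j \le q}$ with $q \ge 2$. I would argue by strong induction on $\dim_F A$. Since $A$ is simple, $AeA = A$, so writing $f = \sum_\ell x_\ell e y_\ell$ and sandwiching by $f$ yields $u_\ell = f x_\ell e \in fAe$ and $v_\ell = e y_\ell f \in eAf$ with $\sum_\ell u_\ell v_\ell = f$. Choosing $\ell_0$ with $w := u_{\ell_0} v_{\ell_0} \ne 0$ and applying Lemma \ref{lemAlg4} inside the simple $F$-algebra $fAf$ (of unit $f$, cf.\ Lemma \ref{lemBasic1}), I branch as follows: if $w$ is invertible in $fAf$ with inverse $w'$, then $u := w' u_{\ell_0} \in fAe$ and $v := v_{\ell_0} \in eAf$ satisfy $uv = f$, and $e' := vu$ is a nonzero idempotent in $eAe$ (nonzero since $(uv)^2 = f \ne 0$), producing a $2 \times 2$ matrix-unit pair between $f$ and $e' \le e$; otherwise the remaining branches of Lemma \ref{lemAlg4}, via Proposition \ref{prop2}, produce a nontrivial idempotent inside $fAf$ that strictly refines $f$. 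Recursing on the strictly smaller simple subalgebras $(e-e')A(e-e')$ and their counterparts built analogously, and invoking the induction hypothesis on dimension, one eventually assembles the desired orthogonal nonzero idempotents $e_1, \ldots, e_q$ with $\sum_i e_i = 1$ together with matrix units $e_{ij}$, where $q \ge 2$ because $e \ne 1$.

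With the matrix units in hand, Lemma \ref{lemBasic2} gives an $F$-algebra isomorphism $A \simeq \MM_q(B)$ for $B := e_{11} A e_{11}$, and $B$ is simple by Lemma \ref{lemBasic1}. Combined with the basic identity $\MM_q(B) \simeq \MM_q(F) \otimes_F B$ recalled in the preliminaries, this yields the stated decomposition $A \simeq \MM_q(F) \otimes_F B$. Finally, if $A$ is central then $F = \rZ(A) = \rZ(\MM_q(B)) = \rZ(B) \rI_q$, forcing $\rZ(B) = F$, so $B$ is central simple.
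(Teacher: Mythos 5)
Your setup (Proposition \ref{prop2} and Lemma \ref{lemAlg4} to seed the construction with a nontrivial idempotent, Lemmas \ref{lemBasic1} and \ref{lemBasic2} to finish once a matrix algebra decomposition exists) and your $2\times 2$ kernel — writing $f=\sum_\ell u_\ell v_\ell$ with $u_\ell\in fAe$, $v_\ell\in eAf$, picking $w=u_{\ell_0}v_{\ell_0}\neq 0$ and testing invertibility in $fAf$ to produce $uv=f$, $vu=e'$ — are exactly the paper's moves (this is Albert's computation with $A_{1i}A_{i1}=A_{11}$). The gap is in the step where you claim to ``eventually assemble'' matrix units by recursing on $(e-e')A(e-e')$ and ``its counterparts.'' A matrix algebra decomposition requires elements $e_{ij}$ linking \emph{every} pair of diagonal idempotents, with all diagonal corners isomorphic; a recursive matrix algebra decomposition of $(e-e')A(e-e')$ gives links only among the pieces of $e-e'$, provides no cross-links between those pieces and $e'$ or $f$, and gives no control ensuring the diagonal corners arising from the recursion have the same dimension as $e'Ae'$ and $fAf$. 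So the recursion does not yield a decomposition of $A$; constructing the missing cross-links and balancing sizes is precisely the content that still has to be supplied.

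The paper sidesteps this by never leaving $A$: it keeps one orthogonal decomposition $1=e_1+\dots+e_n$, directly attempts to build $e_{i1},e_{1i}$ for \emph{all} $i$ at once using $A_{1i}A_{i1}=A_{11}$, and whenever a candidate $a_i\in A_{11}$ fails to be invertible (Proposition \ref{prop2} in $A_{11}$) or $e_{ii}\neq e_i$, it refines the decomposition of $1_A$ and restarts — terminating because the number of orthogonal pieces is bounded by $\dim_F A$. If you want to keep a bottom-up recursion, you would need to make the cross-linking and size-matching explicit; carrying that out leads you back to essentially this direct construction.
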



\noindent N.B.: 
Note that \(q^2[B:F]=[A:F]\).

\begin{proof} By Lemma \ref{lemBasic2}, it is enough to prove that \(A\) admits a
matrix decomposition. By Proposition~\ref{prop2}, \(A\) contains a nontrivial idempotent~\(e\).  We start with the orthogonal nontrivial idempotents \(e, f\) where \(e+f=1_A\)  and we shall refine this decomposition \((e,f)\) of \(1_A\) into a finer one
\((e_1,\dots,e_n)\) as long as we don't get a matrix decomposition. This terminates because \(A\)~is a strictly finite vector space.

We define \(A_{ij} = e_iAe_j\). We have \(A_{ij}A_{jl} = e_iAe_je_jAe_l = e_iAe_l=A_{il}\)
(\(Ae_jA=A\) since \(A\) is simple) and \(A_{ij}A_{kl} = 0\) if \(j\neq k\).

For each \(i>1\) we have \(A_{1i}A_{i1}= A_{11}\). So we can find \(a_{1i}\) in \(A_{1i}\) and \(a_{i1}\in A_{i1}\) such that \(a_{1i}a_{i1} = a_1 \neq 0\) in \(A_{11}\). 
If \(A_{11}a_1\neq A_{11}\), Proposition \ref{prop2} gives a nontrivial idempotent in \(A_{11}\) which refines our decomposition. So we can assume
\(A_{11}a_1= A_{11}\),  and \(a_1\) invertible in \(A_{11}\) by Lemma \ref{lemAlg4}.
Let  \(b_1\in A_{11}\) with \(b_1a_1=a_1b_1=e_1\); let \(e_{1i}=b_1a_{1i}\in A_{1i}\) and \(e_{i1}=a_{i1}\); we have \(e_{1i}e_{i1} = e_{1}\). This is done for all \(i>1\).
\\ 
We define then \(e_{11}=e_1\) and \(e_{ij} = e_{i1}e_{1j}\) for \(i,j> 1\);  we have \(e_{ij}e_{jl} = e_{il}\) for all \(i,j,l\)
  and \(e_{ij}e_{kl} = 0\) if \(k\neq j\).
Note that \(e_{ii}= e_{i1}e_{1i}\) is nonzero since \(e_{ii}e_{i1}=e_{i1}\neq 0\). So \(e_{ii}\) is a nonzero idempotent in \(A_{ii}\). If \(e_{ii}\neq e_i\) we can split
  further \(e_i\). So we can assume \(e_i = e_{ii}\) and we have a matrix decomposition for~\(A\).
  The \(F\)-algebra \(A_{11}\) is simple by Lemma~\ref{lemBasic1}. 
  
  Finally, if \(x\in A_{11}\) commutes with all elements of \(A_{11}\), then \(x 1_{\MM_q(A_{11})}\) commutes with all matrices in \(\MM_q(A_{11})\), so \(A_{11}\) is central if \(A\) is central. 
  And \(A_{11}\) is the central simple \(F\)-algebra~\(B\) we are looking for.
\end{proof}

\noindent N.B.: Dynamically we do as if \(e_1Ae_1\) is a division algebra, and as if we cannot split further  any   \(e_i\) for \(i>1\):
we do this along the lines of \citealt[Theorem 3.9]{Albert}.

\medskip In classical mathematics, we deduce from Theorem \ref{th1} that we can force~\(B\) to be a division algebra since otherwise the theorem works for~\(B\), which can be written in the form \(\MM_r(C)\) with \(r>1\) and we are done by induction. So if we have a test, for simple \(F\)-algebras, telling us whether  there is a nonzero noninvertible element, we get the classical form of Wedderburn's theorem.

\section{Central simple algebras:  dynamical methods} 


The work \citealt{CM} interprets in a topos-theoretic way the dynamical interpretation of the algebraic closure of a field
as presented in \citealt{D5}: one proves intuitionistically results about algebraically closed fields, and one obtains a corresponding
statement involving finite extensions of a discrete fields by adding successively formal roots of monic polynomials.

\subsection{Splitting central simple algebras}
   
\begin{lemma}\label{algclos}
  If \(A\) is a central simple \(K\)-algebra and \(K\) is algebraically closed, then \(A\)  is split.
\end{lemma}

\begin{proof}
\cha  {Our proof is by induction on the \(K\)-dimension~\(m\) of~\(A\), the case \(m=1\) being trivial. If \(m>1\), the minimal polynomial \(f(X)\in F[X]\) of \(\vep_2\) may be written \((X-a_1)\cdots(X-a_r)\) with \(r>1\) and the \(a_i\)'s in~\(F\). So \((\vep_2-a_1\vep_1)\cdots(\vep_2-a_r\vep_1)=0\). Since the factors are nonzero, \(\vep_2-a_1\vep_1\) is a nonzero noninvertible element of~\(A\). By Wedderburn's theorem \ref{th1},  \(A\simeq \MM_q(B)\) for some \(q>1\). We are done by induction.}
\end{proof}


 Following \citealt{CM}, we obtain the following result.

 \begin{sloppypar}
   \begin{thdef}[first splitting theorem]\label{th3}\label{definotadeg} Let \(A\) be a central simple
     \(F\)-algebra.
     We can construct a commutative \(F\)-algebra~\(K\) in the triangular form \(F[x_1,\dots,x_\ell]=F[X_1,\dots,X_\ell]\slash(P_1(X_1),P_2(X_1,X_2),\dots)\), where each \(P_i((x_j)_{j<i},X_i)\) is monic of degree~\hbox{\(>1\)} in \(X_i\) in the polynomial ring \(F[(x_j)_{j<i}][X_i]\), such that \(A_K\simeq \MM_q(K)\).
     As an important corollary, the \(F\)-dimension of~\(A\), which is equal to the \(K\)-dimension of \(A_K\), is always a square: \([A:F]=[A_K:K]=q^2\).
     The integer~\(q\) is called the 
     \emph{degree}\footnote{Bourbaki's terminology is that \(q\) is the ``reduced degree'' of~\(A\), and that \(q^2\) is the ``degree'' of~\(A\).} of the central simple algebra~\(A\) and is denoted by \(\deg_F(A)=\deg(A)\).
   \end{thdef}  
 \end{sloppypar}

\begin{theorem}  [Skolem-Noether] \label{thSkNo}
Let \(A\) be central simple and \(\sigma\) an \(F\)-automorphism of~\(A\). We can find an invertible element~\(w\) of~\(A\) such that \(\sigma = \Int(w)\).
\end{theorem}  
The proof below is an example of descent
and corresponds to the cohomological argument in \citealt[Theorem 2.7.2]{GS}.
\begin{proof}
  Let us consider the \(F\)-vector space~\(M\) of elements \(a\) in \(A\) such that \(a x = \sigma(x)a\) for all \(x\) in \(A\). We claim that \(M\) is free of rank \(1\).

  We know that we have a triangular extension \(L = F[a_1,\dots,a_n]\) such that \(A_L\) is a matrix algebra.
  By Proposition \ref{prop5anneau}, \(L\otimes_F M\) is free of rank \(1\) over \(L\), being generated by an invertible element of \(A_L\).

  By faithfully flat descent \citep[Theorem~VIII.6.1]{CACM}, \(M\) is locally free of rank \(1\) over \(F\), i.e.\ free of rank \(1\), since \(F\) is a field. It is thus generated by an element
  of \(A\), which is invertible, since this element is invertible in \(A_L\) and \(L\) is a faithfully flat extension of \(F\).
\end{proof}

  
  


Another proof of the Skolem-Noether theorem will be given in Remark~\ref{rem:baer}.

\subsection{Some consequences} 





\begin{theorem}\label{th2}
  If \(A\) is central simple then \(A\otimes_F A\op\) is a matrix algebra.
  More precisely,
the canonical map \(A\otimes_F A\op\rightarrow \MM_q(F)\) is an isomorphism.
\end{theorem}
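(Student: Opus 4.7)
The plan is to prove Theorem~\ref{th2} by induction on $m=[A:F]$, using Proposition~\ref{prop3} as the dynamical splitting device and Theorem~\ref{th1} (constructive Wedderburn) as the recursion mechanism. The base case $m=1$ is immediate: $A=F$ and the canonical map $F\otimes_F F\to \MM_1(F)=F$ is the identity.

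For the inductive step, I would invoke Proposition~\ref{prop3} dynamically. Either the canonical map $\varphi\colon A\otimes_F A^{op}\to\MM_m(F)$ is already an isomorphism and there is nothing to prove, or $A$ contains a nonzero noninvertible element. In the latter branch, Theorem~\ref{th1} produces a decomposition $A\simeq\MM_q(B)$ with $q>1$ and $B$ a central simple $F$-algebra of strictly smaller dimension $[B:F]=m/q^2<m$. The inductive hypothesis applied to $B$ then yields $B\otimes_F B^{op}\simeq\MM_{[B:F]}(F)$.

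I would then unpack $A\otimes_F A^{op}$ using the transposition isomorphism $\MM_q(B)^{op}\simeq\MM_q(B^{op})$, the identity $\MM_q(R)\simeq\MM_q(F)\otimes_F R$ for any $F$-algebra $R$, and the associativity and commutativity of $\otimes_F$:
\[
A\otimes_F A^{op}\simeq \MM_q(F)\otimes_F\MM_q(F)\otimes_F(B\otimes_F B^{op})\simeq \MM_{q^2}(F)\otimes_F\MM_{[B:F]}(F)\simeq\MM_m(F).
\]
Thus $A\otimes_F A^{op}$ is simple and has $F$-dimension $m^2$. Since $\varphi$ sends $1\otimes 1$ to $\Id_A$, it is a nonzero morphism of $F$-algebras, so its kernel, being a proper two-sided ideal of a simple algebra, vanishes. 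Equal $F$-dimensions on source and target then promote injectivity to bijectivity.

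The main obstacle I expect is the dynamical bookkeeping. Proposition~\ref{prop3} and Theorem~\ref{th1} both proceed by opening branches on auxiliary choices (a coefficient being invertible or not, an idempotent being refinable or not), so the induction must be interpreted as a dynamical proof tree whose every leaf terminates with the desired isomorphism; well-foundedness is guaranteed by the strict drop $[B:F]<[A:F]$. A subsidiary point to verify carefully is that the chain of identifications leading to $A\otimes_F A^{op}\simeq\MM_m(F)$ is through $F$-algebra isomorphisms, not merely $F$-linear isomorphisms, but this is routine given the standard structure of Kronecker products of matrix algebras.
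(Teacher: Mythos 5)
Your proof follows the paper's route exactly: induct on $[A:F]$, invoke Proposition~\ref{prop3} to produce a nonzero noninvertible element when the canonical map fails to be an isomorphism, factor $A\simeq\MM_q(B)$ by Theorem~\ref{th1}, recurse on $B$, and reassemble via Kronecker products of matrix algebras. Your closing step---promoting the abstract isomorphism $A\otimes_F A^{op}\simeq\MM_m(F)$ to the assertion that the \emph{canonical} map is an isomorphism, by observing that its kernel is a proper two-sided ideal of a simple algebra and then counting dimensions---is a sound detail that the paper leaves implicit; and your worry about ``dynamical bookkeeping'' is unnecessary here, since once Proposition~\ref{prop3} and Theorem~\ref{th1} are established as theorems, the branching conditions in the induction (the canonical map being an isomorphism or not, an element being invertible or not) are decidable over a discrete field, so the recursion is an ordinary well-founded induction with no open branches.
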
  

\begin{proof}
  This is true if \(A\) is a matrix algebra, and by the splitting theorem \ref{th3} it is true for \(A_L\) with \(L\) a faithfully flat
  commutative \(F\)-algebra. As \(L\)~is a faithfully flat extension of~\(F\), the isomorphism between \(A_L\otimes_L A_L\op\) and \(\MM_q(L)\)
  yields an isomorphism between \(A\otimes_F A\op\) and \(\MM_q(F)\).
\end{proof}  

\begin{remark}\label{rem:baer}
  This yields another proof of the Skolem-Noether theorem \ref{thSkNo} as noted by \citet[page~590]{Baer} \citep[the argument has been taken over by the reprints of][]{Albert}.
We  use that \(A\otimes_F A\op \simeq\MM_q(F)\). One extends \(\sigma\) into an automorphism of \(\MM_q(F)\) which leaves each element
\(\rR_b\) invariant. By  Proposition \ref{prop5}, this automorphism is given by an element~\(u\) of \(\GL_q(F)\). But then \(u\) commutes with each \(\rR_b\), hence  is of the form \(\rL_a\), and \(\sigma\) is the inner automorphism given by~\(a\).
\end{remark}
%

 
 \begin{theorem}[{\citealp[Theorem 1.17]{Albert}}] \label{thBr2} 
 Let \(B\simeq \MM_q(F)\) be a matrix subalgebra of an algebra~\(A\). 
 Then \(A = B \otimes_F C\simeq \MM_q(C)\) with~\(C = \rZ_A(B)\).
 \end{theorem}
 \begin{proof}
   This follows from Lemma \ref{lemBasic2}: consider the \(e_{ij}\in B\) given by the isomorphism \(B\simeq \MM_q(F)\); then \(C=\diag{A}\).
 \end{proof}

 \begin{theorem} \label{thBr3}
 Let  \(A\) be an \(F\)-algebra and \(B\) a central simple subalgebra. Then  \(A = B \otimes_F \rZ_A(B) \).
 \end{theorem}

 \begin{proof}
   By the first splitting theorem \ref{th3}, we have a faithfully flat extension \(L\) of \(F\) such that \(B_L = B\otimes_F L\) is
   an \(L\)-matrix algebra. Also \(\rZ_{A_L}(B_L) = (\rZ_A(B))_L\) and, as the canonical map \(B_L\otimes_L (\rZ_A(B))_L\rightarrow A_L\) is an isomorphism by Theorem \ref{thBr2}, so does the canonical map \(B\otimes_F \rZ_A(B)\rightarrow A\).
 \end{proof}


\begin{proposition}\label{prop4}
  If \(A\otimes_F B\) is central simple then both \(A\) and~\(B\) are central simple. In particular, \(A\otimes_F A\op\) is a matrix algebra \ssi \(A\) is central simple.
\end{proposition}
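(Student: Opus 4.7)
The plan has three pieces: show that $A$ and $B$ are central, show that they are simple, and deduce the final ``in particular'' consequence.

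For centrality, I would invoke Lemma~\ref{lemAlg3+} with $C=A$ (as a subalgebra of $A$) and $D=B$ (as a subalgebra of $B$). Since the centraliser of $A$ in $A$ is $\rZ(A)$ and similarly for $B$, the lemma identifies the center of $A\otimes_F B$ with $\rZ(A)\otimes_F \rZ(B)$. The hypothesis that $A\otimes_F B$ is central forces this to be $F\cdot 1$, so $\dim_F\rZ(A)\cdot\dim_F\rZ(B)=1$. Each factor has dimension at least $1$ (its center contains the unit), hence both are exactly $1$, giving $\rZ(A)=\rZ(B)=F$.

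For simplicity of $A$, I would take a nonzero $a\in A$ and aim to exhibit $1_A$ as an element of $AaA$. Since the inclusion $A\hookrightarrow A\otimes_F B$, $a\mapsto a\otimes 1$, is $F$-linear and injective, $a\otimes 1$ is nonzero; simplicity of $A\otimes_F B$ then gives $1_A\otimes 1_B=\sum_i u_i(a\otimes 1)v_i$ for some $u_i,v_i\in A\otimes_F B$. The key step is to extract an equation in $A$ from this equation in $A\otimes_F B$: fix an $F$-basis $(\eta_k)_{k=1}^n$ of $B$ with $\eta_1=1_B$, expand $u_i=\sum_k u_{ik}\otimes\eta_k$ and $v_i=\sum_\ell v_{i\ell}\otimes\eta_\ell$ with $u_{ik},v_{i\ell}\in A$, use the structure constants $\eta_k\eta_\ell=\sum_r d^r_{k\ell}\eta_r$ to reshape the right-hand side as $\sum_{i,k,\ell,r} d^r_{k\ell}(u_{ik}\,a\,v_{i\ell})\otimes\eta_r$, and compare the $\eta_1$-component using the uniqueness of expansion in the $F$-basis $(\vep_p\otimes\eta_r)$ of $A\otimes_F B$. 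This yields $\sum_{i,k,\ell} d^1_{k\ell}\,u_{ik}\,a\,v_{i\ell}=1_A$, a presentation of $1_A$ as an element of $AaA$. Simplicity of $B$ is symmetric, swapping the roles of the two tensor factors and using an $F$-basis of $A$ starting with $1_A$.

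The ``in particular'' clause is immediate: matrix algebras over $F$ are central simple by the basic examples, so if $A\otimes_F A^{op}$ is a matrix algebra then the first part of the proposition applies. The only slightly delicate step I anticipate is the basis-comparison trick for simplicity; everything else reduces to a direct invocation of Lemma~\ref{lemAlg3+} together with a dimension count.
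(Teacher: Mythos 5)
Your proof is correct and follows essentially the same route as the paper: for centrality the paper also offers Lemma~\ref{lemAlg3+} as a valid alternative, and for simplicity the paper's ``inspect the equality'' is exactly your basis-expansion argument comparing $\eta_1$-coefficients (you just carry out the expansion of $1_C=\sum_i u_i(a\otimes 1)v_i$ more explicitly, which if anything is slightly more careful than the paper's single-product shorthand).
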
  
\begin{proof}
Let \((\vep_1,\dots,\vep_m)\) and \((\vet_1,\dots,\vet_p)\) be \(F\)-bases of~\(A\) and~\(B\), respectively, with \(\vep_1=1_A\) and \(\vet_1=1_B\). The elements \(\vep_i\otimes \vet_j\) form a basis of \(C=A\otimes_F B\). Any element of~\(C\) can be written as \(\sum_ja_j\otimes \vet_j\) with the \(a_j\)'s in~\(A\).
If \(x\neq 0\) in~\(A\), then from \(1_C\in C(x\otimes 1_B)C\) we deduce \(1_{A}\in AxA\): inspect the equality \((\sum_ja_j\otimes \vet_j)\allowbreak(x\otimes 1_B)(\sum_ja'_j\otimes \vet_j)=1_C\). So \(A\) is simple. In a similar way, \(A\) is central
. 
\end{proof}
%



In the following corollary, the field~\(K\) containing~\(F\) is not supposed to be finitely generated over~\(F\).

\begin{corollary}\label{corth2} Let \(A\) be an \(F\)-algebra, \(K\) a discrete field extension of~\(F\) and \(A_K=K\otimes_F A\).
Then \(A\) is \(F\)-central simple \ssi \(A_K\) is \(K\)-central simple.
\end{corollary}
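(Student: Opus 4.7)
The plan is to reduce everything to the characterisation stated in the note just after Proposition~\ref{prop4}: an $F$-algebra $A$ of dimension $m$ is $F$-central simple if and only if the canonical map $\varphi\colon A\otimes_F A^{op}\to \MM_m(F)$ is an isomorphism of $F$-algebras. Applying this characterisation on both sides of the corollary reduces the problem to showing that $\varphi$ is an isomorphism if and only if the analogous canonical map $\varphi_K\colon A_K\otimes_K A_K^{op}\to \MM_m(K)$ is an isomorphism of $K$-algebras.

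The first step is the bookkeeping: if $(\vep_i)_{i=1}^m$ is an $F$-basis of $A$ with $\vep_1=1_A$, then $(1_K\otimes \vep_i)_{i=1}^m$ is a $K$-basis of $A_K$, so $A_K$ is strictly finite over $K$ of $K$-dimension $m$. Using the standard tensor product identifications
$$A_K\otimes_K A_K^{op}\simeq K\otimes_F(A\otimes_F A^{op})\quad\text{and}\quad\MM_m(K)\simeq K\otimes_F \MM_m(F),$$
one checks on generators that $\varphi_K$ corresponds to $1_K\otimes_F\varphi$. The forward implication is then immediate: if $A$ is $F$-central simple, Theorem~\ref{th2} makes $\varphi$ an isomorphism, hence so is $1_K\otimes_F\varphi=\varphi_K$, and Proposition~\ref{prop4} applied over $K$ gives that $A_K$ is $K$-central simple.

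For the converse I would argue on matrices. Choose $F$-bases of the source and target of $\varphi$ (each of $F$-dimension $m^2$) and let $M\in\MM_{m^2}(F)$ be the matrix representing $\varphi$; the matrix representing $\varphi_K$ in the corresponding tensored bases is the very same $M$, viewed in $\MM_{m^2}(K)$. If $\varphi_K$ is an isomorphism, then $\det(M)$ is invertible in $K$, hence nonzero in $K$. Since $F\hookrightarrow K$, the element $\det(M)\in F$ is nonzero in $F$, and since $F$ is a discrete field it is invertible in $F$. Thus $M$ is invertible over $F$, which means $\varphi$ is an $F$-linear bijection; it is moreover a ring morphism by construction, so it is an $F$-algebra isomorphism, and Proposition~\ref{prop4} yields that $A$ is $F$-central simple.

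The main obstacle is the converse, and specifically the fact that $K$ is not assumed to be strictly finite over $F$, so Lemma~\ref{lemAlg2} cannot be invoked directly. The determinant observation above is what replaces it: an $F$-linear endomorphism of a strictly finite $F$-vector space is bijective if and only if it becomes bijective after extending scalars to the discrete field $K$, precisely because invertibility of a square matrix is controlled by the nonvanishing of one element of $F$.
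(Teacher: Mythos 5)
Your proof is correct, and for the converse direction it takes a genuinely different route from the paper's. The forward implication is identical: both apply Theorem~\ref{th2} over $F$, tensor with $K$, and invoke Proposition~\ref{prop4} over $K$. For the converse, the paper handles simplicity and centrality separately and directly: simplicity of $A$ is reduced to the solvability over $F$ of the $F$-linear system expressing $\vep_1\in\sum_{j,k}F\vep_k x\vep_j$, which follows from solvability over $K$; centrality is obtained by observing that $\rZ(A)=\bigcap_i\Ker(\delta_{\vep_i})$ is a kernel computation compatible with the scalar extension $F\to K$. You instead route both properties through the single equivalence ``$A$ is $F$-central simple iff the canonical map $\varphi$ is an isomorphism'' (the note after Proposition~\ref{prop4}, needing Theorem~\ref{th2} over $K$ as well), and then reduce the base-change question to the nonvanishing of one element $\det(M)\in F$, detectable in $F$ precisely because $F\hookrightarrow K$ and $F$ is discrete. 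Both arguments ultimately rest on the same constructive linear-algebra principle (an $F$-linear question is settled over $F$ iff it is settled over a discrete overfield $K$, with no finiteness assumption on $K/F$); the paper's version exhibits the two halves of ``central simple'' explicitly and stays closer to the classical argument, while yours is more economical and packages the whole thing into a single determinant. You also correctly identify that Lemma~\ref{lemAlg2} is not directly applicable here since $K$ is not assumed strictly finite over $F$, which is exactly the point the paper silently works around in the same spirit.
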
  

\begin{proof}
  We use the previous Proposition. The canonical map \(A\otimes_F A\op\rightarrow \End_F(A)\) is an isomorphism
  iff the corresponding map \(A_K\otimes_K A_K\op\rightarrow \End_K(A_K)\) is, since it can be expressed by the fact that a given
  determinant is invertible.
\end{proof}


\begin{corollary}\label{cor2th2} Let \(A\) and~\(B\) be central simple algebras.
 Then  \(A\otimes_F B\) is central simple.
\end{corollary}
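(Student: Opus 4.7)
The plan is to reduce the claim, via the characterisation stated in the Note just after Proposition~\ref{prop4}, to checking that the canonical map $C \otimes_F C^{op} \to \MM_{mp}(F)$ is an isomorphism, where $C = A \otimes_F B$ and $m = \dim_F A$, $p = \dim_F B$. So it suffices to exhibit $C \otimes_F C^{op}$ as a matrix algebra over $F$ and then invoke Proposition~\ref{prop4}.

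The first step is the identification $(A \otimes_F B)^{op} \simeq A^{op} \otimes_F B^{op}$, which is an immediate verification on the multiplication table of the tensor product (the opposite product on a tensor factor-wise multiplication just opposites each factor). The second step is to regroup factors, using associativity and commutativity of $\otimes_F$ for $F$-algebras (keeping track that tensor products of commuting subalgebras are well-defined as in Lemma~\ref{lemAlg1}), to obtain
\[
C \otimes_F C^{op} \;\simeq\; (A \otimes_F B) \otimes_F (A^{op} \otimes_F B^{op}) \;\simeq\; (A \otimes_F A^{op}) \otimes_F (B \otimes_F B^{op}).
\]

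Now Theorem~\ref{th2} applied to each of $A$ and $B$ gives canonical isomorphisms $A \otimes_F A^{op} \simeq \MM_m(F)$ and $B \otimes_F B^{op} \simeq \MM_p(F)$. Substituting and using the elementary identity $\MM_m(F) \otimes_F \MM_p(F) \simeq \MM_m(\MM_p(F)) \simeq \MM_{mp}(F)$ (already recalled in the Preliminaries after the formula for $\MM_q(F) \otimes_F B$), one concludes $C \otimes_F C^{op} \simeq \MM_{mp}(F)$. Tracing the chain of isomorphisms back, this is precisely the canonical map, so by Proposition~\ref{prop4} applied to $C \otimes_F C^{op}$, the algebra $C = A \otimes_F B$ is central simple.

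There is no real obstacle here; the only slightly delicate point is making sure the isomorphism $(A \otimes_F B)^{op} \simeq A^{op} \otimes_F B^{op}$ and the rearrangement are compatible with the \emph{canonical} maps into endomorphism algebras, so that Proposition~\ref{prop4} can be invoked in its stated form. This is a direct computation on basis elements and does not require any new ideas beyond the universal property of $\otimes_F$ recalled in the footnote to Lemma~\ref{lemAlg1}.
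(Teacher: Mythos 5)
Your proof is correct and follows essentially the same route as the paper: decompose $(A\otimes_F B)\otimes_F(A\otimes_F B)^{op}$ as $(A\otimes_F A^{op})\otimes_F(B\otimes_F B^{op})$, apply Theorem~\ref{th2} to each factor to get a matrix algebra, and conclude by Proposition~\ref{prop4}. One small remark: your final concern about tracing the \emph{canonical} map is unnecessary, since Proposition~\ref{prop4} as stated only requires $C\otimes_F C^{op}$ to be (isomorphic to) a matrix algebra, not that a particular canonical map be the isomorphism.
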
  
\begin{proof}
We have \((A\otimes_F B)\otimes_F(A\otimes_F B)\op\simeq (A\otimes_F A\op)\otimes_F(B\otimes_F B\op)\simeq \MM_p(F)\otimes_F\MM_q(F)\simeq \MM_{pq}(F)\). We conclude by Proposition \ref{prop4}. 
\end{proof}
%

\begin{corollary}\label{cor3th2} 
  \(A\otimes_F B\) is central simple \ssi \(A\) and~\(B\) are central simple.
  \(A\otimes_F B\) is central  \ssi \(A\) and~\(B\) are central.
\end{corollary}  

\begin{remark} \label{remcor2th2} 
We have also seen that
if \(K\) is a finite field extension of~\(F\), \(K\) is simple,
but for \(x\in K\setminus F\), the element \(x\otimes 1- 1\otimes x\) is nonzero noninvertible in \(K\otimes_F K\), so \(K\otimes_F K\) is not simple. 
  
\end{remark}

\subsection{Reduced characteristic polynomial} 

Here is yet another use of faithfully flat descent.

\begin{propdef}\label{reduced}
  Let \(A\) be a central simple algebra over \(F\) of dimension \(n^2\). Let \(u\) be an element of \(A\)
and consider the linear map of multiplication by \(u\) and its characteristic polynomial \(C_u\) in \(F[X]\)
of degree \(n^2\). There exists a polynomial \(R_u\) in \(F[X]\) of degree \(n\) uniquely determined by the condition \(R_u^n = C_u\) and we have
  \(R_u(u) = 0\).
\begin{itemize}[nosep]
\item This polynomial~\(R_u\) is called the \emph{reduced characteristic polynomial of~\(u\)}, and will be denoted by \(\Cprd_{A/F}(u)\) or by \(\Cprd(u)\) when the context is clear.
\item  Let \(R_u=X^r+\sum_{k=1}^r(-1)^ks_kX^{r-k}\). The coefficient \(s_1\),  denoted by  \(\Trd(u)\),
is called the \emph{reduced trace of~\(u\) (over~\(F\))}.
\item  The coefficient \(s_r\),  denoted by \(\Nrd(u)\),
is called the \emph{reduced norm of~\(u\) (over~\(F\))}.
\end{itemize} 
\end{propdef}

\begin{proof}
  We know that \(A_L\) becomes of the form \(\MM_n(L)\) for some extension \(L = F[a_1,\dots,a_p]\) where we add successively
formal roots of monic nonconstant polynomials.

We can then consider a matrix decomposition \(e_{ij}(\vec{a})\) of \(A_L\) and write \(u = \sum m_{ij}(\vec{a})e_{ij}\).
We have the characteristic polynomial \(R_u(\vec{a})\) of the matrix \(\varphi(u) = m_{ij}(\vec{a})\) in \(\MM_n(L)\) and the problem
reduces to showing that \(R_u(\vec{a})\), which is a priori in \(L[X]\), is actually in \(F[X]\).

It is enough to look at the case \(p = 1\). So we have \(L = F[a]\) and \(P(a) = 0\), \(P\) monic nonconstant.
We have \(u = \sum m_{ij}(a) e_{ij}(a)\).

We consider \(K = L\otimes_F L = F[a,b]\) where we add a second formal root~\(b\) such that \(P(b) = 0\).

We have two matrix decomposition of \(A_K\) given by \(e_{ij}(a)\) and \(e_{ij}(b)\). Also \(K\) is zero dimensional.

By the matrix version of the Skolem-Noether Theorem, we have \(v\) in \(A_K\) invertible such that \(ve_{ij}(a) = e_{ij}(b) v\).
This implies that the matrces \(m_{ij}(a)\) and \(m_{ij}(b)\) are similar via \(v\) (writing \(v\) as an invertible
matrix in \(\MM_n(K)\)) and so have the same characteristic polynomial. This means that \(R_u(a) = R_u(b)\)
and so \(R_u(a)\) is in \(F[X]\). It also satisfies \(R_u^n = C_u\) and is uniquely determined by this
condition.\footnote{Note that we have \(R_u(a)^n = C_u\) but we cannot conclude from this, since \(F[a]\) may have non trivial nilpotent elements.}
\end{proof}

Such descent argument can be traced back to \citet{Chatelet}, who was using Galois descent, which is here replaced by faithfully flat descent.

\begin{lemma} \label{lemNrd}
\(\Nrd(uv)=\Nrd(u)\Nrd(v)\), and \(u\) is invertible iff \(\Nrd(u)\) is nonzero. \\
In this case \(u^{-1}=\widetilde u\ \Nrd(u)^{-1}\), where \(\widetilde u\) is equal to \(Q(u)\) with \((-1)^{r-1}P(X)=XQ(X)-\Nrd(u)\). 
\end{lemma}
%
\begin{sloppypar}
  \begin{proof} In the previous proof we have \(\varphi(uv)=\varphi(u)\varphi(v)\); taking determinants we get \(\Nrd(uv)=\Nrd(u)\*\Nrd(v)\).
    The last assertion follows from \(\Cprd(u)(u)=0\).
  \end{proof}
\end{sloppypar}
%





\subsection{Separably closed fields}

Using the reduced polynomial, we now prove in a similar way
that a (dynamical)  extension~\(L\) that splits~\(A\) can be constructed as a separable \(F\)-algebra.
This means that \(L\) is obtained by successive additions of formal roots of separable polynomials.

A discrete field~\(K\) is said to be \emph{separably closed} if any separable polynomial is split. 
 

\begin{lemma}\label{sepclos}
  If \(K\) is separably closed and \(A\) is a central simple algebra over~\(K\) then \(A\) is split.
  \end{lemma}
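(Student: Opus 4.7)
The plan is to mirror the proof of Lemma \ref{algclos} via induction on $m=[A:F]$. For $m=1$, $A=F$ is trivially split. For $m>1$, the goal is to find a nonzero noninvertible element $a\in A$; by Wedderburn's Theorem \ref{th1}, this yields $A\simeq\MM_q(B)$ with $q>1$ and $B$ central simple, and induction on $B$ closes the proof.

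I would take $x=\vep_2\in A\setminus F$ (possible because $m>1$, up to permuting the basis) and examine its minimal polynomial $g(X)\in F[X]$, of degree $\geq 2$. Since $F$ is separably closed of characteristic $p>0$, every irreducible factor of $g$ is either linear $X-\lambda$ or purely inseparable $X^{p^j}-c$ with $j\geq 1$ and $c\notin F^p$. Four cases arise:
\begin{itemize}
\item \emph{$g$ has a linear factor $X-\lambda$:} writing $g=(X-\lambda)h$ with $h(x)\neq 0$, one has $(x-\lambda)h(x)=0$ with both factors nonzero in $A$, so $x-\lambda$ is a zero divisor.
\item \emph{$g$ has two distinct irreducible factors:} the Chinese remainder theorem in $F[x]$ gives a nontrivial idempotent $e\in A$, which is noninvertible since $e(1-e)=0$.
\item \emph{$g=(X^{p^j}-c)^e$ with $e\geq 2$ and $X^{p^j}-c$ irreducible:} the element $y=x^{p^j}-c$ satisfies $y^e=0$ and $y\neq 0$ (else $X^{p^j}-c$ would be the minimal polynomial), giving a nonzero nilpotent.
\end{itemize}
These three easy cases produce a noninvertible element directly.

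The hard case is when $g=X^{p^j}-c$ is itself the irreducible purely inseparable minimal polynomial, so that $L:=F[x]$ is a purely inseparable field extension of $F$ of degree $p^j\geq 2$. Here I would first observe that $L$ is itself separably closed (a classical fact: a separable extension of $L$ descends to a separable extension of $F$, which must be trivial). Then I apply Theorem~\ref{thBr4} to the subfield $L\subseteq A$: the centralizer $B=\rZ_A(L)$ is $L$-central simple with $[B:L]\cdot[L:F]^2=[A:F]$, so $[B:L]<[A:F]$. The induction hypothesis, applied to $B$ as an $L$-CSA over the separably closed $L$, yields $B\simeq\MM_r(L)$. If $r\geq 2$, the matrix algebra decomposition $(e_{ij})$ of $B$ consists of elements of $A$ satisfying the matrix-unit relations, hence gives a matrix algebra decomposition of $A$ over $F$; by Lemma~\ref{lemBasic2}, $A\simeq\MM_r(e_{11}Ae_{11})$ with $[e_{11}Ae_{11}:F]<[A:F]$, and we induct.

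The main obstacle is the subcase $r=1$, where $B=L$ is a maximal subfield and $[A:F]=[L:F]^2$: here $A_L$ is split but we still need $A$ itself to be split, which is a descent problem along the purely inseparable extension $L/F$. To resolve this I would follow Blanchard and, instead of $\vep_2$, choose a different element $y\in A$ whose minimal polynomial falls outside the hard case. The existence of such $y$ can be ensured by observing that the matrix algebra $A_L\simeq\MM_{p^j}(L)$ contains elements with separable minimal polynomial (companion matrices of separable polynomials, for instance), and since separability of the minimal polynomial is an $F$-algebraic condition on the coordinates (in the regular representation, given by a discriminant), if $F$ is infinite we can find such $y$ in $A$ itself; its minimal polynomial then falls into Case (i) above. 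When $F$ is finite and separably closed, it is perfect, hence algebraically closed, and Lemma \ref{algclos} applies. Making this generic-element argument rigorous and constructive is the technical heart of Blanchard's argument.
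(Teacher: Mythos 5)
Your overall plan—find a nonzero noninvertible element, invoke Wedderburn (Theorem~\ref{th1}), and induct on $[A:F]$—is the same as the paper's, and your Cases (i)--(iii) are fine. But the resolution you sketch for the hard case (iv), and in particular for the $r=1$ subcase, has a concrete defect, and it is here that the paper's proof diverges completely from yours.

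The element you want to ``choose instead of $\vep_2$'', namely some $y\in A$ whose minimal polynomial over $F$ is separable of degree $>1$, provably does \emph{not} exist if $A$ is a division algebra over a separably closed $F$: such a $y$ would generate a nontrivial separable field extension of $F$. So the ``choice'' of $y$ is not a choice at all; its existence is logically equivalent to $A$ having a zero divisor, which is precisely what you are trying to establish. The generic-element device can in principle be bent into a proof of this, but not with the polynomial you point to: the discriminant of the characteristic polynomial of the \emph{regular} representation $\rL_a$ vanishes identically on any noncommutative $A$, because $\Cp(\rL_a)=\mu_a^{[A:F]/[F[a]:F]}$ is always a proper power. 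What would work is the discriminant of $\Cp(\varphi(y))$ for an isomorphism $\varphi\colon A_L\to\MM_n(L)$, i.e.\ of the reduced characteristic polynomial; but that device needs to be built carefully over the purely inseparable $L$ (it cannot simply be imported from Proposition~\ref{lemRedPc}, which depends on Theorem~\ref{th4}, which depends on the very lemma you are proving), and you would then still have to turn ``the polynomial is nonzero over $L$ and $F$ is infinite'' into an explicit construction of a zero divisor. None of this is in your proposal; you have deferred ``the technical heart'' to an argument you attribute to Blanchard but do not supply. (Incidentally, your ``finite $F$'' case is vacuous: a separably closed field of characteristic $p$ contains every $\TT_{p^n}$, so it is never finite.)

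The paper sidesteps all of this. It reduces, as you do, to the irreducible purely inseparable case $X^{p^\ell}-d$, but then sets $a=\vep_2^{p^{\ell-1}}$ so that $a\notin F$, $a^p\in F$, considers the inner automorphism $\sigma=\Int(a)$, and observes that $\sigma-\Id_A$ is nonzero and nilpotent. This produces a $u$ with $\sigma(u)=1+u$; since $u^q\in F$ for some $p$-power $q$ (any commutative subfield of a division algebra over separably closed $F$ is purely inseparable), one gets $u^q=\sigma(u^q)=(1+u)^q=1+u^q$ and the absurdity $1=0$. There is no centralizer argument, no appeal to Theorem~\ref{thBr4}, and no Zariski density; the only obstacles one can meet when running this computation in a general CSA are failures to invert, which directly hand you the noninvertible element needed to apply Wedderburn and recurse. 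This is both shorter and visibly constructive, whereas your route requires a genuinely infinite-field density argument and a reduced-characteristic-polynomial machinery that sits awkwardly at this point in the development.
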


\begin{proof}
  It is enough to show that we can find an element \(u\) in \(A\setminus K\) such that \(\Cprd(u)\) is separable, when the \(K\)-dimension of
  \(A\) is \(>1\). Let \((\alpha_{i})\) be a basis
  of \(A\) over \(K\) and consider  the polynomial \(\Cprd(\sum X_{i}\alpha_{i})\) in \(K[X_{i}][T]\). Its discriminant is \(\neq 0\), since
  we can find values \(b_{i}\) in some faithfully flat extension \(L\) of \(K\) such that \(\Cprd(\sum b_{i}\alpha_{i})\) is
  separable,\footnote{\(A_L\) is a matrix algebra over \(L\), with a matrix decomposition \((e_{pq})\), and we take
  \(\sum b_{i}\alpha_{i}\) of the form \(\sum_{p} c_p e_{pp}\) with the~\(c_p\) pairwise distinct.} using the first splitting Theorem \ref{th3}.
  Since \(K\) is separably closed, it is infinite, and hence we can find values \(a_{i}\) {\em in} \(K\) such that the discriminant of
  \(\Cprd(u)\) is \(\neq 0\) for \(u = \sum a_{i}\alpha_{i}\), i.e.\ \(\Cprd(u)\) is separable.
\end{proof}

 Note that it is essential for this argument to use the \emph{reduced} polynomial.

As before, reading dynamically this proof \citep[compare][]{CM}, we obtain the following result and a variation.

\begin{theorem}[second splitting theorem]\label{th4}~
  If \(A\) is~a central simple algebra over a field~\(F\),
  we can construct a commutative separable \(F\)-algebra~\(K\) which splits~\(F\).
This extension is obtained in the triangular form 
\[F[x_1,\dots,x_k]=F[X_1,\dots,X_k]/(P_1(X_1),P_2(X_1,X_2),\dots)\] with each \(P_i((x_j)_{j<i},X_i)\) monic and separable in \(F[(x_j)_{j<i}][X_i]\) as a polypnomial  in \(X_i\).
\end{theorem}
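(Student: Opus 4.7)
The plan is to reread the proof of Lemma~\ref{sepclos} dynamically, in close analogy with how Theorem~\ref{th3} dynamically reads Lemma~\ref{algclos}, with the additional invariant that every polynomial $P_i$ of the triangular tower must be monic separable over the preceding layer. Following \citet{CM}, each step of the classical argument that relies on ``$F$ is separably closed'' is reinterpreted in one of two ways: when the classical proof would invoke separable closure to split a separable polynomial $g \in L[X]$ of degree $>1$ over the current layer $L = F[x_1,\ldots,x_{i-1}]$, we instead adjoin a root of an irreducible separable factor of $g$, adding a new layer $L[x_i] = L[X]/(P_i)$ with $P_i$ monic separable; when the classical proof reaches a contradiction from the failure of an element to lie in $F$, we exploit that contradiction to extract a nonzero noninvertible element of~$A$.

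I would proceed by induction on $m = \deg_F(A)$, the case $m = 1$ being trivial. For $m > 1$, I inspect the minimal polynomial $f$ of $\vep_2 \in A$. A nontrivial coprime factorization $f = g h$ yields, via the Chinese Remainder Theorem in $F[\vep_2]$, a nontrivial idempotent, hence a nonzero noninvertible element of~$A$; Wedderburn's Theorem~\ref{th1} then gives $A \simeq \MM_s(B)$ with $s > 1$ and $B$ central simple of strictly smaller $F$-dimension, and by induction $B$ is split by a separable triangular extension $K$, which by Corollary~\ref{corth2} also splits $A$ via $A_K \simeq \MM_s(B_K)$. Otherwise $f$ is a power of a single irreducible polynomial; if that irreducible factor is separable of degree $>1$, take it as the next $P_i$ and pass to $A_{L[x_i]}$, where $f$ now acquires a linear factor and reduces to the previous case. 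In the remaining subcase $f = X^{p^\ell} - d$ with $\ell \geq 1$, I follow Lemma~\ref{sepclos} literally, building $a = \vep_2^{p^{\ell-1}}$, $\sigma = \Int(a)$, and $b, c, u \in A$ with $\sigma(u) = 1 + u$; then I apply the same dichotomy recursively to the minimal polynomial of $u$, whose purely inseparable branch ultimately produces the clash $\sigma(u^{p^n}) = u^{p^n}$ versus $\sigma(u^{p^n}) = 1 + u^{p^n}$, which dynamically translates into the detection of a nonzero noninvertible element at some earlier stage of the computation.

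The main obstacle I expect is the bookkeeping of termination and of the separability invariant. Termination follows from a lexicographic induction: the outer variable $\deg_F(A)$ strictly decreases at each Wedderburn reduction, while the inner recursion on minimal polynomials is bounded by total degree and inseparability depth. Separability of each adjoined $P_i$ is preserved because we never commit a purely inseparable root to the tower: the purely inseparable situation arises only inside the contradiction-deriving branch of Lemma~\ref{sepclos}, whose resolution dynamically yields a nonzero noninvertible element and invokes Wedderburn rather than extending the base. The triangular form is automatic, since the construction only ever builds a tower $F \subseteq F[x_1] \subseteq F[x_1, x_2] \subseteq \cdots$ by successively adjoining roots of monic separable polynomials in the previous layer.
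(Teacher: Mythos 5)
Your proposal takes essentially the same route as the paper: a dynamical rereading of Lemma~\ref{sepclos} with an induction on $\deg_F(A)$ driven by Wedderburn reductions, as the paper does for Theorem~\ref{th3} from Lemma~\ref{algclos}. The paper's own ``proof'' is a single sentence pointing to the dynamical method, so the detail you add is welcome, but two steps as you state them would not go through constructively and should be repaired.

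First, the case analysis on the minimal polynomial $f$ of $\vep_2$ invokes factorisation. Over a general discrete field one cannot decide whether $f$ is ``a power of a single irreducible polynomial'', nor can one exhibit ``an irreducible separable factor'' of $g$ --- these require a factorisation algorithm which is exactly what the separably factorial hypothesis of Theorem~\ref{th40} provides and Theorem~\ref{th4} does without. The constructive substitute is gcd-based: set $d = \gcd(f,f')$. If $1\le\deg d<\deg f$ then $d(\vep_2)\cdot(f/d)(\vep_2)=0$ with both factors nonzero (of degree $<\deg f$), so a nonzero noninvertible element of $A_K$ is already at hand, without any appeal to coprimeness or the Chinese Remainder Theorem. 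If $d=1$ then $f$ is separable; one adjoins a formal root, taking $P_i := f$ itself as the new layer --- monic and separable, but not irreducible, exactly as in Remark~\ref{rem} --- and then $\vep_2 - x_i$ is the required zero divisor. If $d=f$, i.e.\ $f'=0$, write $f=g(X^p)$ and recurse on $\vep_2^p$. Only when this inner recursion terminates with degree~$1$ does one land on $f = X^{p^\ell}-d$ and pass to the $\Int(a)$ construction.

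Second, the sentence ``which dynamically translates into the detection of a nonzero noninvertible element at some earlier stage'' conceals the actual constructive step. The purely inseparable terminal case for $u$ is discharged by ex falso, not by ``going back'': if $u^{p^n}\in K$ then $\sigma(u^{p^n})=u^{p^n}$ (since $\sigma$ fixes $K$) while $\sigma(u^{p^n})=(1+u)^{p^n}=1+u^{p^n}$, forcing $1=0$ in $K$; but $K$ is a nonzero $F$-algebra, being a monic triangular quotient hence free of positive rank over $F$. Since the dichotomy at each level of the inner recursion on $u, u^p, u^{p^2},\dots$ is decided by a gcd test, and the ``$u^{p^n}\in K$'' branch has been refuted, every run of the recursion lands in the ``reducible'' or ``separable of degree $>1$'' case, producing the zero divisor that feeds Wedderburn. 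That positive discharge of the impossible branch is the content your sketch leaves implicit; once supplied, your termination and separability bookkeeping is sound.
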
  

A discrete field~\(K\) is said to be \emph{separably factorial} when any separable polynomial in \(F[X]\) can be factorised into irreducible factors.
This property is inherited by strictly finite separable extensions of~\(K\).

\begin{theorem} \label{th40}
\cha{Let \(F\) be a separably factorial field and \(A\)  a central simple
  \(F\)-algebra.
We can construct a separable field extension~\(K\) of~\(F\) that is strictly finite and splits~\(A\). This extension is obtained in a triangular form.}
 \end{theorem}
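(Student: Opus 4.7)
The plan is to bootstrap Theorem~\ref{th4} by using the separably factorial hypothesis on $F$ to successively replace each triangular extension step by an irreducible factor, thereby converting the commutative separable $F$-algebra supplied by Theorem~\ref{th4} into a separable field extension, all the while keeping the triangular form.

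First I would apply Theorem~\ref{th4} to obtain a splitting algebra of the form
$$K_0 = F[x_1,\dots,x_k] = F[X_1,\dots,X_k]/(P_1(X_1),P_2(X_1,X_2),\dots,P_k(X_1,\dots,X_k))$$
with each $P_i\big((x_j)_{j<i},X_i\big)$ monic and separable in $X_i$ over $F[(x_j)_{j<i}]$. Then I would construct the desired field $L$ in $k$ stages. At stage~$i$ I would have built a strictly finite separable field extension $F_{i-1}/F$ together with an $F$-algebra surjection $F[x_1,\dots,x_{i-1}]\twoheadrightarrow F_{i-1}$, and I would want to extend this to stage~$i$. Since $F$ is separably factorial and separable factoriality is inherited by strictly finite separable extensions, $F_{i-1}$ is itself separably factorial. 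The image $\bar P_i(Y)\in F_{i-1}[Y]$ of $P_i$ is monic and separable, hence factors into monic irreducible (necessarily separable) factors; I would pick any such irreducible factor $Q_i(Y)\in F_{i-1}[Y]$ and set $F_i := F_{i-1}[Y]/(Q_i(Y))$. Then $F_i/F_{i-1}$ is a strictly finite separable field extension, so $F_i/F$ is too, and by construction there is an $F$-algebra surjection $F[x_1,\dots,x_i]\twoheadrightarrow F_i$ extending the previous one.

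After $k$ stages I would take $L := F_k$, which is strictly finite and separable over $F$, and which by construction sits as a field quotient of $K_0$. Invoking the remark in the \emph{basic definitions and examples} section (that a field quotient of a splitting algebra is itself a splitting field) I would conclude
$$A_L = L\otimes_F A \simeq L\otimes_{K_0} A_{K_0} \simeq L\otimes_{K_0}\MM_q(K_0)\simeq \MM_q(L),$$
so $L$ splits $A$. The triangular form is visible directly: writing $y_i$ for the image of $x_i$ in $L$, we have $L = F[y_1,\dots,y_k]$ with each $y_i$ annihilated by the monic separable irreducible polynomial $Q_i\in F[y_1,\dots,y_{i-1}][Y_i]$.

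The only substantive point that needs care is the inheritance of separable factoriality from $F$ to each $F_{i-1}$, which the paper records just before the statement of Theorem~\ref{th40}; everything else is a direct application of Theorem~\ref{th4}, of the quotient principle for splittable algebras, and of the fact that divisors of separable polynomials are separable. No step requires factoring non-separable polynomials, which is crucial because only separable factoriality is assumed.
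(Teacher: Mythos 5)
Your proof is correct and is the natural way to make precise what the paper leaves implicit. The paper itself gives no explicit proof of Theorem~\ref{th40}, presenting it merely as a ``variation'' of Theorem~\ref{th4} obtained by rereading the dynamical argument for Lemma~\ref{sepclos} under the separably factorial hypothesis; your argument fills that gap by taking Theorem~\ref{th4} as a black box and then collapsing the triangular separable $F$-algebra $K_0$ to a field quotient $L$, one layer at a time, by factorizing each $\bar P_i$ over the field $F_{i-1}$. All the ingredients you invoke are recorded in the paper: the triangular form of $K_0$ (Theorem~\ref{th4} and Remark~\ref{rem}), the inheritance of separable factoriality by strictly finite separable extensions (stated just before Theorem~\ref{th40}), and the quotient principle $A_L\simeq\MM_q(L)$ for a field quotient $L$ of a splitting algebra $K_0$ (in the preliminaries). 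The only point you gloss over, but which is straightforward, is that separability of $P_i$ descends to $\bar P_i$ under the quotient map: a Bézout relation $uP_i+vP_i'=1$ in $F[(x_j)_{j<i}][X_i]$ maps to one in $F_{i-1}[Y]$, so every irreducible factor $Q_i$ of $\bar P_i$ is automatically separable. Whether one phrases the construction as modifying the dynamical proof ``from the inside'' (at each branching point, factorize and pick an irreducible factor) or as quotienting the output of Theorem~\ref{th4} ``from the outside'' as you do, the resulting tower is the same; your version is arguably the cleaner exposition.
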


%
%


\section{The Brauer group of a discrete field~\(F\)}

\begin{definition} \label{defiBreq}
Two central simple \(F\)-algebras \(A\) and~\(B\) are said to be \emph{Brauer equivalent} if there are~\(m\) and~\(n\) such that \(\MM_m(A)\simeq \MM_n(B)\).
We denote this by \(A\sim_F B\) or \([A]_F^\Br=[B]_F^\Br\) or \([A]=[B]\).  
\end{definition}

\noindent N.B.: It is easy to see that \(\bullet\sim_F\bullet\) is an equivalence relation by using \(\MM_p(\MM_q(A))\simeq \MM_{pq}(A)\). 
\begin{lemma} \label{lemBr1}
If \(A\sim_F A'\) and \(B\sim_F B'\) then \(A\otimes_FA'\sim_F B\otimes_FB' \) 
\end{lemma}
%
\begin{proof} We have \(\MM_m(A)\simeq \MM_n(A')\) and \(\MM_p(B)\simeq \MM_q(B')\). Since \(\MM_m(A)\simeq \MM_m(F)\otimes_F A\) we get 
\[\MM_{mp}(A\otimes_FB)\simeq \MM_m(A)\otimes_F\MM_{p}(B)\simeq \MM_n(A')\otimes_F\MM_{q}(B')\simeq \MM_{nq}(A'\otimes_FB').\qedhere\]
\end{proof}

As a corollary, the equivalence classes of central simple algebras form a group with neutral element \(1=[F]\), the inverse of \([A]\) being \([A\op]\) by Theorem \ref{th2}. It is called the \emph{Brauer group of~\(F\)} and is denoted by~\(\Br(F)\).

Note the following simplification rule:  if \(A\), \(B\),~\(C\) are central simple algebras such that \(A\otimes_FC\sim_F  B\otimes_FC\), 
this can be written \([A]_F[C]_F = [B]_F[C]_F\), so \([A]_F=[B]_F\), i.e.\ \(A\sim_F B\).

\medskip


\begin{theorem} \label{thBr10} Let \(A\) be a division algebra and~\(B\) a central simple algebra.
If \( \MM_m(A)\simeq C\simeq \MM_n(B)\), then \(n\)~divides~\(m\) and \(B\simeq \MM_{m/n}(A)\). 
%
%

\end{theorem}

\begin{proof}
The \(F\)-algebra~\(C\) is isomorphic to \(\End_A(A^m)\) and to \(\End_B(B^n)\). Assume in a first step that \(A\)~is a division algebra. Then \(A^m\)~is a right \(A\)-vector space. Consider the matrix decomposition \((f_{ij})_{1\leq i,j\leq n}\) for~\(C\) corresponding to the isomorphism \(C\simeq\MM_n(B)\): it produces a decomposition of~\(A^m\) into the direct sum of~\(n\) pairwise isomorphic right \(A\)-vector spaces. Therefore \(n\)~divides~\(m\) and \(B\) is isomorphic to~\(\MM_{m/n}(A)\).
\end{proof}
\begin{corollary} \label{thBr1} Let \(A\) and~\(B\) be central simple algebras.
\begin{enumerate}[nosep]
\item If \( \MM_m(A)\simeq C\simeq \MM_n(B)\) then there exist \(m'\), \(A'\), \(n'\), \(B'\) such that \(A\simeq \MM_{m'}(A')\), \(B\simeq \MM_{n'}(B')\), \(mm'=nn'\) and \(A'\simeq B'\). 
%
%
\item In particular, 
\begin{enumerate}[nosep]
\item \([A]_F=1\) \ssi \(A\) is split.
\item \cha{If \(A\sim_F B\)  and \([B:F]=[A:F]\) then \(A\simeq B\)}.
\item Assume that \(A=\MM_r(D)\) where \(D\) is a division algebra. Then \(B\sim_F A\) \ssi there exists an~\(s\) such that \(B\simeq \MM_s(D)\), and \(B\simeq A\) \ssi \(B\sim_F A\) and \(r=s\).
\end{enumerate}

\end{enumerate}
\end{corollary}
%
\begin{proof}
  \emph{1}. Let us consider the proof of Theorem~\ref{thBr10}. We have used the hypothesis that \(A\)~is a division algebra in the representation of~\(A^m\) as the direct sum of vector spaces all isomorphic to~\(B\). If this representation fails, it is because we find a nonzero nonregular element in~\(A\) and we know by Theorem~\ref{th1} that \(A\) is isomorphic to some~\(\MM_{m'}(A')\) with \(m'>1\) and \([A':F]<[A:F]\). So we can replace in our hypothesis \(A\) by~\(A'\) and \(m\) by \(mm'\). This kind of problem can only happen a finite number of times.

 \medskip\noindent \emph{2}. Left to the reader.
\end{proof}
%
\begin{lemma} \label{lemBr2}\label{corthBr1}
Let \(A\) and~\(B\) be central simple \(F\)-algebras, and \(K\)  an extension of~\(F\). If \(A\sim_FB\), then \(A_K\sim_KB_K\). In particular, if \(A\sim_FB\), then \(K\) splits~\(A\) \ssi \(K\) splits~\(B\).
\end{lemma}
%
\begin{proof}
If \(\MM_m(A)\simeq \MM_n(B)\) as \(F\)-algebras, then 
\[\MM_m(A_K)
\simeq K\otimes_F\MM_m(A)
\simeq K\otimes_F\MM_n(B)
\simeq \MM_n(B_K)
\] 
as \(K\)-algebras.
\end{proof}

This gives a natural  group morphism \(\Br(F)\to\Br(K)\).

\begin{lemma} \label{lemBr4}  
Let  \(A\) be a central simple \(F\)-algebra and \(L\) a field extension that is strictly finite over~\(F\). If~\(L\) splits~\(A\), then we can construct an algebra \(B\sim_FA\) such that \(L\) is (isomorphic to) a maximal 
subfield of~\(B\). Moreover \([B:F]=[L:F]^2\).
\end{lemma}
%
\begin{proof}
  Suppose that \(L\) is a field extension of~\(F\) of degree~\(n\) splitting~\(A\): we have a matrix decomposition 
  \((e_{ij})_{1\leq i,j\leq m}\) of \(A_L\). Then \(V = e_{11}A_L = Le_{11}+\dots+Le_{1m}\) is an \(F\)-vector space of dimension~\(mn\).

  Let us suppose that \(A\)~is a division algebra: then \(V\)~may also be viewed
  as a right \(A\)-vector space; let \(r\)~be its dimension
  . Then \(\dim_FV =r\dim_FA=rm^2\), so that \(rm^2=mn\) and \(rm = n\).

  The field~\(L\) may be viewed as a subalgebra of~\(B=\End_A(V)\), and \(B\) has dimension \(r^2m^2 = n^2\) over \(F\).

  In the general case, we can follow this argument dynamically: if we encounter an element in \(A\) that is noninvertible
  when trying to find an \(A\)-basis of \(V\), we can apply Theorem \ref{th1} and replace \(A\) by a simpler central simple
  \(F\)-algebra.
\end{proof}

\subsection{Involutions of central simple algebras}

Reference: \citealt[Chapter 8]{scharlau}.

We assume \(\Char (K) \neq 2\).

An \emph{involution} of a \(K\)-algebra \(A\) is a  map \(J:A\to A\) satisfying \(J(a+b)=J(a)+J(b)\), \(J(ab)=J(b)J(a)\) and \(J^2=\Id_A\).
We use the notation \(J(a)=a^J\).   

We let \(A_J^+=A^+:=\sotq{x\in A}{x^J=x}\) and \(A_J^-=A^-:=\sotq{x\in A}{x^J+x=0}\).

We have \(A^+=\sotq{\frac 1 2(x+x^J)}{x\in A}\) and \(A^-=\sotq{\frac 1 2(x-x^J)}{x\in A}\). So \(A=A^+\oplus A^-\).

\smallskip If \(A\) is central, then \(J(K)=K\) and \(F=K \cap A^+\) is a subfield with \(F=K\) or \([K:F]=2\).  If \(F=K\) we say that the involution is  \emph{of the first kind}; if \([K:F]=2\) we say that  the involution is  \emph{of the second kind} or \emph{hermitian}. In each case, the product of two involutions of the same kind is a \(K\)-automorphism. If \(A\) is central simple, this product is an inner automorphism \(\Int(c)\).

\smallskip We describe now the usual constructive classification of involutions of the first kind of central simple algebras. We begin with the case of~\(\MM_n(K)\).

\begin{lemma} \label{lemInv1stkind}
Let \(A=\MM_n(K)\) and \(J\) an involution of the first kind (i.e.\ a \(K\)-linear involution). Recall that \(A=A_J^+\oplus A_J^-\). 
There exists \(b\in A^\times \) such that \(J=\Int(b)\circ t\) and \(b^t=\pm b\). There are two cases. 
\begin{enumerate}[nosep] 
\item 
In the first case, the matrix~\(b\) is symmetric and  \([A_J^+:K]=n(n+1)/2\) (\(J\) is said to be \emph{orthogonal}). 
\item 
In the second case, the matrix~\(b\) is skew-symmetric, \(n\) is even and  \([A_J^+:K]=n(n-1)/2\)  (\(J\) is said to be \emph{symplectic}). Moreover, 
 the characteristic polynomial \(\Cp(a)(T)\) of any matrix \(a\in A_J^+\) is equal to the square of the \emph{Pfaffian characteristic polynomial} \(\Cpf(a)(T)
\) of~\(a\): it is the unique monic polynomial in \(K[T]\) such that \({\Cpf(a)}^2=\Cp(a)\). 
\end{enumerate}
\end{lemma}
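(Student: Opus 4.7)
The plan is as follows. First I would observe that the transpose $t\colon a\mapsto a^t$ is a $K$-linear antiautomorphism of $A=\MM_n(K)$, and so is $J$. Hence their composition $J\circ t$ is a $K$-linear \emph{automorphism} of $\MM_n(K)$, and by Proposition~\ref{prop5} (equivalently by Skolem--Noether, Theorem~\ref{thSkNo}) it is inner: there exists $b\in\GL_n(K)$, unique up to a scalar in $K^\times$, with $J\circ t=\Int(b)$. Since $t^2=\Id_A$, this means $J(a)=ba^tb^{-1}$ for all $a\in A$.

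Next I would determine the sign $\lambda$. Applying $J$ once more and using that $J$ is an antiautomorphism yields $J^2(a)=b(b^t)^{-1}\,a\,b^t b^{-1}$; requiring $J^2=\Id_A$ forces $b(b^t)^{-1}$ to commute with every element of $A$, hence to be a scalar $\lambda\in K^\times$. From $b=\lambda b^t$ and $b^t=\lambda b$ one gets $\lambda^2=1$, and since $\Char K\neq 2$ either $b^t=b$ or $b^t=-b$. Rescaling $b$ by any $\mu\in K^\times$ does not affect $\lambda$, so the sign is an invariant of $J$.

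The dimension count is then a bijection argument. The condition $J(a)=a$ reads $ab=ba^t$, and using $b^t=\lambda b$ this is equivalent to $(ab)^t=b^ta^t=\lambda ba^t=\lambda(ab)$. Thus $a\mapsto ab$ is a $K$-linear isomorphism from $A_J^+$ onto the space of matrices $M\in\MM_n(K)$ satisfying $M^t=\lambda M$, giving $[A_J^+:K]=n(n+1)/2$ if $\lambda=1$ and $n(n-1)/2$ if $\lambda=-1$. In the symplectic case, $b$ is skew-symmetric and invertible, and $\det(b)=\det(b^t)=(-1)^n\det(b)\neq 0$ forces $n$ to be even.

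The main obstacle is constructing the Pfaffian characteristic polynomial in the symplectic case. I would introduce the matrix $M(T):=b(TI_n-a)\in\MM_n(K[T])$ and use $a^tb=ba$ (which is just $J(a)=a$) to compute
$$M(T)^t=(TI_n-a^t)b^t=-(TI_n-a^t)b=-b(TI_n-a)=-M(T),$$
so $M(T)$ is skew-symmetric of even size over the commutative ring $K[T]$. The Pfaffian identity over any commutative ring in characteristic $\neq 2$ then gives
$$\Pf(M(T))^2=\det(M(T))=\det(b)\,\Cp(a)(T)=\Pf(b)^2\,\Cp(a)(T).$$
Inspecting the top-degree term in $T$ (each entry of $M(T)$ is affine in $T$, and the $T$-homogeneous part of $M(T)$ is $Tb$) shows that $\Pf(M(T))\in K[T]$ has degree exactly $n/2$ with leading coefficient $\Pf(b)$. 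Setting $\Cpf(a)(T):=\Pf(M(T))/\Pf(b)$ yields a monic polynomial of degree $n/2$ in $K[T]$ whose square is $\Cp(a)(T)$; uniqueness of a monic square root in the UFD $K[T]$ finishes the argument. The delicate points to verify carefully are the skew-symmetry of $M(T)$, the divisibility of $\Pf(M(T))$ by the constant $\Pf(b)$ in $K[T]$, and monicness.
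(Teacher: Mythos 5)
Your proof of the first three assertions (the decomposition $J=\Int(b)\circ t$ with $b^t=\pm b$, and the two dimension counts for $A_J^+$) is correct and follows essentially the same route as the paper: compose $J$ with transposition to get an inner automorphism $\Int(b)$, derive $b^t=\lambda b$ with $\lambda^2=1$ from $J^2=\Id_A$, and transport the condition $J(a)=a$ to a (skew-)symmetry condition on $ab$.

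Where you genuinely part ways with the paper is the Pfaffian characteristic polynomial. The paper simply cites \citet[Proposition~(2.9)]{BOI} for that claim, whereas you give an explicit, self-contained construction: form $M(T)=b(TI_n-a)\in\MM_n(K[T])$, check via $a^tb=ba$ and $b^t=-b$ that $M(T)^t=-M(T)$, invoke the universal identity $\Pf(M)^2=\det(M)$ for alternating matrices over a commutative ring to get $\Pf(M(T))^2=\det(b)\,\Cp(a)(T)=\Pf(b)^2\,\Cp(a)(T)$, observe that $\Pf(M(T))$ has degree $n/2$ with leading coefficient $\Pf(b)\in K^\times$ (so the ``divisibility'' worry dissolves: you are just dividing by a nonzero scalar), and set $\Cpf(a)=\Pf(M(T))/\Pf(b)$; uniqueness of a monic square root in $K[T]$ then gives the last sentence. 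This is a nice constructive supplement to what the paper outsources, and it fits the spirit of the paper well. Two small remarks: the Pfaffian--determinant identity is a polynomial identity over $\mathbb{Z}$ and needs no characteristic hypothesis; and your determinant argument $\det(b)=(-1)^n\det(b)\neq 0$ for forcing $n$ even is an explicit justification for a fact the paper only states.
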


%
\begin{proof}
When \(A=\MM_n(K)\), the transposition \(t:A\to A\) is an involution and \(J\circ t=(t\circ J)^{-1}\) is an automorphism of~\(A\), equal to \(\Int (b)\) for some \(b\in A^\times\); in other words \(J(a^t)=b a b^{-1}\), or \(J(a)=b\ a^t \ b^{-1}\).
Writing \(J^2=\Id_A\) we find 
\[
x=(x^J)^J=b (x^J)^t b^{-1}=b (b x^t b^{-1})^t b^{-1}   
= v^{-1} x v \;\hbox{ with }v=b^tb^{-1}.
\] 
Since \(\Int(v)=\Id_{A}\) we have \(b^tb^{-1}=\lambda \in K\), i.e.\ \(b^t=\lambda  b\), which gives \(b=\lambda b^t=\lambda^2b\). \\
So \(\lambda =\pm1\) and \(J=\Int(b)\circ t\) with \(b^t=\pm b\). Note that \(b\) is the matrix of some nondegenerate bilinear form on~\(K^n\), 
\((x,y)\mapsto x^t b y\),\footnote{Here \(x\) and~\(y\) are viewed as column vectors.} which is symmetric or skew-symmetric. \\
If~\(b\) is symmetric (\(J\) is orthogonal),  we get \(A_J^+=\sotq{a\in A}{(ab)^t=ab}\), so \([A_J^+:K]=n(n+1)/2\).
\\
If \(b\) is skew-symmetric (\(J\) is symplectic),  \(n\) has to be even. We get \(A_J^+=\sotq{a\in A}{(ab)^t=-ab}\), so \([A_J^+:K]=n(n-1)/2\). 
\citet[Proposition (2.9)]{BOI} provide the rest of the lemma.
%
\end{proof} 

In the two cases, for a given~\(J\), the matrix~\(b\) and the nondegenerate (symmetric or skew-symmetric) bilinear form defined by~\(b\) are well-defined up to a scalar multiplicative factor.

Now we state the general theorem.

\begin{proposition} \label{propInv1stkind}
Let \(A\) be a central simple \(K\)-algebra of degree \(n>1\) and~\(J\) an involution of the first kind (i.e.\ a \(K\)-linear involution). Recall that \(A=A_J^+\oplus A_J^-\).
There are two cases. 
\begin{enumerate}[nosep] 
\item In the first case, \([A_J^+:K]=n(n+1)/2\) (\(J\) is said to be orthogonal). 

\item In the second case, \(n\) is even and  \([A_J^+:K]=n(n-1)/2\)  (\(J\) is said to be symplectic). Moreover the reduced characteristic polynomial \(\Cprd(a)(T)\) of any  \(a\in A_J^+\) is equal to the square of the \emph{Pfaffian characteristic polynomial} \(\Cpf(a)(T)
\) of~\(a\); \(\Cpf(a)(T)\) is the unique monic polynomial in \(K[T]\) such that \({\Cpf(a)}^2=\Cprd(a)\).  
%
\end{enumerate}
\end{proposition}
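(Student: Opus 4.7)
The plan is to reduce to the split case, Lemma \ref{lemInv1stkind}, by dynamical splitting of $A$, and then to descend the conclusions to $A$ itself. First, apply Theorem \ref{th4} to obtain a commutative separable $K$-algebra $L$ splitting $A$, with an explicit isomorphism $\varphi\colon A_L\simarrow \MM_n(L)$. The involution $J$ extends uniquely to an $L$-linear involution $J_L=J\otimes\mathrm{Id}_L$ on $A_L$ of the first kind, which transfers through $\varphi$ to an $L$-linear involution of $\MM_n(L)$. By Lemma \ref{lemInv1stkind}, we are either in the orthogonal case, with $[(A_L)^+_{J_L}:L]=n(n+1)/2$, or in the symplectic case, with $n$ even and $[(A_L)^+_{J_L}:L]=n(n-1)/2$.

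Next, observe that $A^+_J$ is the image of the $K$-linear endomorphism $e=\frac{1}{2}(\mathrm{Id}_A+J)$ of $A$, and that scalar extension from $K$ to $L$ preserves the rank of $e$; hence $[A^+_J:K]=[(A_L)^+_{J_L}:L]$, which immediately yields the two dimension formulas. In particular, the dichotomy orthogonal/symplectic is determined by the intrinsic invariant $[A^+_J:K]$, so it does not depend on the choice of $L$ nor on the accidents of the dynamical construction of $L$ — this is the key consistency that makes the statement well posed.

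For the Pfaffian polynomial in the symplectic case, given $a\in A^+_J$, Lemma \ref{lemInv1stkind} yields a monic polynomial $P(T)\in L[T]$ satisfying $P(T)^2=\Cp_{\MM_n(L)}(\varphi(a))(T)=\Cprd_{A/K}(a)(T)\in K[T]$, the last equality because the reduced characteristic polynomial agrees over $L$ with the usual matrix characteristic polynomial. The harder point is showing $P(T)\in K[T]$: I would descend exactly as in the proof of Proposition \ref{lemRedPc}, using the triangular form $L=K[x_1,\dots,x_k]$ from Remark \ref{rem}, considering successively the universal splitting algebras of the defining polynomials $P_i$, and applying Lemma \ref{lemGal1} to conclude at each step that the coefficients of $P(T)$ are fixed under the corresponding Galois action and therefore lie in the previous level. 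Uniqueness of such a monic square root in $K[T]$ is automatic because $K[T]$ is a GCD domain whose divisibility lattice-group is torsion-free, so the squaring map on monic polynomials is injective.

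The main obstacle is precisely this final Galois descent of the Pfaffian polynomial, since $L$ is in general only a commutative algebra (possibly produced dynamically), and the polynomial $P(T)$ is a priori defined only through the arbitrary splitting $\varphi$; the same Galois-fixed-point machinery already used for the reduced characteristic polynomial in Proposition \ref{lemRedPc} resolves both independence of choices and $K$-rationality in one stroke.
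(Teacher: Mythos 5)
Your proposal follows the paper's route: split $A$ (dynamically) by a commutative $K$-algebra $L$, invoke the matrix-algebra case Lemma~\ref{lemInv1stkind} for $A_L$, and descend; you merely spell out what the paper compresses into ``these assertions transfer clearly to $J$'', and your rank argument $[A_J^+:K]=[(A_L)^+_{J_L}:L]$ via the idempotent $\frac12(\mathrm{Id}_A+J)$ is exactly the right observation. One simplification worth noting: the Galois descent you propose for the Pfaffian polynomial is heavier than necessary --- since $\Char(K)\neq 2$, the coefficients $b_j$ of a monic square root $P=T^d+b_1T^{d-1}+\cdots$ of a monic $Q=T^{2d}+c_1T^{2d-1}+\cdots\in K[T]$ are forced by the universal recursion $b_1=c_1/2$, $b_2=(c_2-b_1^2)/2$, and so on, so if such a $P$ exists in $L[T]$ its coefficients automatically lie in $K$, with no appeal to Lemma~\ref{lemGal1} or the universal splitting algebra.
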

%
\begin{proof}
  By faithfully flat descent, as in Proposition \ref{reduced}.
\end{proof}




%

\section{Constructive rereading of a proof by Becher} \label{secBecher}

 
\smallskip
 We denote by \(\Br_2(F)\) the subgroup of \(\Br(F)\) of elements \([A]\) of order \(2\) (i.e.\ such that
 \([A]^2=1\)). 
 
\smallskip   In this final section, \(F\) is a discrete field of characteristic \(\neq 2\). We shall prove two constructive versions of the following result in \citealt[Theorem]{Becher}.

\begin{theorem} \label{thBecher}
Let \(F\) be a discrete field of characteristic \(\neq 2\). Every element of \(\Br_2(F)\) is split by a finite extension of~\(F\) obtained by a tower of quadratic extensions. 
\end{theorem}

\subsection{Some results about quaternion algebras}

When \(F\) has characteristic \(\neq 2\), a \emph{quaternion algebra} is defined as the 4-dimensional \(F\)-algebra \(\rh_F(a,b)=\rh(a,b)\) with basis \((1,\alpha,\beta,\gamma)\), the multiplication table being determined by
\[
\alpha^2=a,\,\beta^2=b,\, \gamma=\alpha\beta=-\beta\alpha\text,
\]
where \(a,b\in F^\times \). This is a central simple algebra. 

We have \(\alpha\gamma=-\gamma\alpha\), \(\beta\gamma=-\gamma\beta\) and \(\gamma^2=-ab\).
It is generated as an \(F\)-algebra by \(\alpha\) and \(\beta\).  
If \(q=x+y\alpha+z\beta+w\gamma\in\rh(a,b)\) we let 
\[\conj{q}=x-y\alpha-z\beta-w\gamma\hbox{  and  }\rN(q)=q\conj{q}=x^2-ay^2-bz^2+abw^2\in F.
\] 
The map \(q\mapsto \conj{q}\) is an involution of \(\rh(a,b)\) and \(\rN(q_1q_2)=\rN(q_1)\rN(q_2)\). 
Moreover \(q\) is invertible \ssi \(\rN(q)\neq 0\).  A \emph{pure quaternion} is a quaternion such that \(x=0\). It is characterised by \(q\notin F\) whereas \(q^2\in F\) (or \(q=0\)), or also by
\(\conj{q}=-q\), or also by \(\rN( q)=-q^2\).

The following theorem recalls basic results which are given with a constructive proof in most textbooks \citep[e.g.][Chapter 1]{GS}.

\begin{theorem}[basics about quaternion algebras] \label{thquaternions1}~
\begin{enumerate}[nosep]
\item \(\rh(a,b)\simeq \rh(u^2a,v^2b)\) \((u,v\in F^\times)\).
\item \(\rh(a,b)\simeq \rh(b,a)\simeq  \rh(a,-ab)\simeq  \rh(b,-ab)\).
\item \( \rh(1,b)\) is split (isomorphic to \(\MM_2(F)\)), so \( \rh(u^2,b)\) is split. 
\item \(\rh(a,1-a)\simeq \rh(1,1)\) if \(a\neq 0,1\).
\item \(\rh(a,b) \otimes_F  \rh(a',b)\simeq \rh(aa',b) \otimes_F \MM_2(F)\). 
\item \(\rh(a,b) \otimes_F  \rh(a,b')\simeq \rh(a,bb') \otimes_F \MM_2(F)\). 
\item The following are equivalent.
\begin{itemize}[nosep]
\item \(\rh(a,b)\) is split.
\item  \(\rN\colon \rh(a,b)\to F\) has a nontrivial zero.
\item  the conic \(ax^2+by^2-z^2=0\) has an \(F\)-rational point (in \(\PP^2(F)\)).
\end{itemize}

\item The following are equivalent.
\begin{itemize}[nosep]
\item \(\rh(a,b)\) is a division algebra.
\item  \(\rN\colon \rh(a,b)\to F\) has no nontrivial zero.
\item  the conic \(ax^2+by^2-z^2=0\) has no \(F\)-rational point (in \(\PP^2(F)\)).
\item  \(\rh(a,b)\) is not split.
\end{itemize}
\item \label{thquaternions1-9}If \(a\) is not a square in~\(F\), the following are equivalent.
\begin{itemize}[nosep]
\item  \(\rh(a,b)\) is split.
\item \(\exists c,d \in F,\, a=c^2-d^2b\).
\end{itemize}
\end{enumerate} 
\end{theorem}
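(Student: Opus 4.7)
\medskip
\noindent\textbf{Proof plan.} The unifying technique is to describe each quaternion algebra by a pair of \emph{standard generators} $\alpha,\beta$ satisfying $\alpha^2=a$, $\beta^2=b$, $\alpha\beta=-\beta\alpha$, and to check that any such pair inside a 4-dimensional $F$-algebra gives an isomorphism with $\rh(a,b)$ (this is immediate from the fact that $1,\alpha,\beta,\alpha\beta$ is forced to be a basis when $a,b\neq0$, since the multiplication table is determined). Items (1) and (2) are then mere substitutions: send $(\alpha,\beta)$ to $(u\alpha,v\beta)$ for (1); exchange $\alpha$ and $\beta$, or replace $\beta$ by $\gamma=\alpha\beta$ (which has square $-ab$ and anticommutes with $\alpha$), to get (2). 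For (3), observe that in $\rh(1,b)$ the element $1-\alpha$ is nonzero noninvertible (since $(1-\alpha)(1+\alpha)=0$ and $\alpha\notin F$), so by Wedderburn's theorem in the form of Theorem \ref{th1} and a dimension count $\rh(1,b)\simeq\MM_2(F)$; alternatively one writes down the explicit isomorphism $\alpha\mapsto\bigl(\begin{smallmatrix}1&0\\0&-1\end{smallmatrix}\bigr)$, $\beta\mapsto\bigl(\begin{smallmatrix}0&b\\1&0\end{smallmatrix}\bigr)$.

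For (5) and (6), the plan is to carry out the double centraliser computation inside $C=\rh(a,b)\otimes_F\rh(a',b)$. Setting $\alpha=\alpha_1\otimes\alpha_2$ and $\beta=\beta_1\otimes1$ gives a subalgebra isomorphic to $\rh(aa',b)$, and a routine basis calculation identifies its centraliser as the four-dimensional subalgebra with basis $\{1\otimes1,\,1\otimes\alpha_2,\,\beta_1\otimes\beta_2,\,\beta_1\otimes\alpha_2\beta_2\}$; choosing $u=1\otimes\alpha_2$ and $v=b^{-1}\beta_1\otimes\beta_2$, one verifies $u^2=a'$, $v^2=1$, $uv=-vu$, so the centraliser is $\rh(a',1)\simeq\MM_2(F)$ by (3), and Theorem \ref{thBr3} yields (5). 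Item (6) follows by symmetry (or from (5) combined with (2)).

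The substantive cases are (7), (8) and (9). The equivalence of the three conditions in (7) is the classical analysis of the quaternion norm form $\rN(q)=x^2-ay^2-bz^2+abw^2$: a nontrivial zero of $\rN$ gives a nonzero noninvertible quaternion (since $q\conj q=0$ and $\conj q\neq0$), hence by Wedderburn~\ref{th1} and dimension 4 the algebra is $\MM_2(F)$; conversely, if $\rh(a,b)\simeq\MM_2(F)$ one lifts a rank-one matrix to a nonzero $q$ with $\rN(q)=0$. The translation to the projective conic $ax^2+by^2-z^2=0$ proceeds by setting $w=0$ in $\rN$ and showing, dynamically, that a zero of $\rN$ with $w\neq0$ can always be reduced to one with $w=0$ by playing with the subspace of pure quaternions of norm zero. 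Item (8) is the logical negation and follows from (7) together with Lemma \ref{lemAlg4} (an element is nonzero and noninvertible \textsl{iff} its reduced norm vanishes), noting that in dimension $4$ the Wedderburn decomposition of Theorem \ref{th1} forces the algebra to be either $\MM_2(F)$ or a division algebra. For (9), I would apply (7) after the substitution $K=F[\sqrt a]$: over this field the norm form $\rN$ becomes $N_{K/F}(x+y\sqrt a)-b\,N_{K/F}(z+w\sqrt a)$, so the existence of a nontrivial zero is equivalent to $b$ being a norm from $K$, i.e.\ $b=c'^2-ad'^2$; the stated form $a=c^2-d^2b$ is obtained by first applying (2) to swap $a$ and $b$.

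The main obstacle will be keeping the argument constructive in items (7) and (8): classically one just says ``$\rh(a,b)$ is a division algebra or not'', but constructively one must phrase everything through the dichotomy of Lemma \ref{lemAlg4} (a given element is invertible or not, according to whether its reduced norm is nonzero), and derive the equivalences by manipulating explicit witnesses. The dynamical reading of Wedderburn's theorem~\ref{th1} is what makes this work: whenever the proof would split on ``$\rh(a,b)$ is a division algebra or $\MM_2(F)$'', we instead test a particular element and branch on its invertibility, which is decidable since it reduces to testing whether a specific element of $F$ is nonzero.
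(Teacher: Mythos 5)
The paper itself supplies no proof of this theorem: the sentence immediately preceding it reads ``The following theorem recalls basic results which are given with a constructive proof in most textbooks'' and cites Gille--Szamuely, Chapter~1. So there is no in-paper argument to compare with, and your plan is in fact more substantive than what is printed. That said, there are two concrete gaps.

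First, item~4 ($\rh(a,1-a)\simeq\rh(1,1)$ when $a\neq 0,1$) is skipped entirely: your enumeration goes from (3) directly to (5)--(6). The argument is short (the conic $ax^2+(1-a)y^2-z^2=0$ has the visible point $(1,1,1)$, so item~7 gives split, and every split quaternion algebra is $\simeq\MM_2(F)\simeq\rh(1,1)$ by item~3), but it must be said. Second and more seriously, in item~7 the passage from ``$\rN$ has a nontrivial zero'' (a rank-$4$ form) to ``the conic $ax^2+by^2-z^2=0$ has a rational point'' (a rank-$3$ subform) is compressed to ``showing, dynamically, that a zero of $\rN$ with $w\neq0$ can always be reduced to one with $w=0$''. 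There is no direct reduction of that kind. The actual constructive route is: from a nontrivial zero of $\rN$, Theorem~\ref{th1} and the dimension count give $\rh(a,b)\simeq\MM_2(F)$; in $\MM_2(F)$ one exhibits a nonzero traceless nilpotent, i.e.\ a nonzero \emph{pure} quaternion $p$ with $p^2=-\rN(p)=0$, which is isotropy of the pure part $\langle-a,-b,\,ab\rangle$; finally one scales by $-ab$ to obtain isotropy of $\langle b,a,-1\rangle$, i.e.\ a rational point of the conic (explicitly, $-au^2-bv^2+abw^2=0$ gives $a(bv)^2+b(au)^2-(abw)^2=0$). Each step is elementary and constructive, but it is a genuine argument, not an automatic reduction of $w$. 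A similar small gap occurs in your derivation of item~9: swapping $a$ and $b$ via item~2 converts ``$b$ is a norm from $F[\sqrt a]$'' into ``$a$ is a norm from $F[\sqrt b]$'' only under the extra hypothesis that $b$ is not a square; when $b$ is a square you must instead invoke the fact that the hyperbolic plane $\langle 1,-b\rangle\simeq\langle 1,-1\rangle$ is universal (which needs $\Char F\neq 2$), or argue directly from $b=c'^2-ad'^2$ by solving for $a$ when $d'\neq0$ and handling $d'=0$ separately.
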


\begin{lemma} \label{lemquaternion1} 
A central \(F\)-algebra~\(A\)  of dimension~\(4\) is a quaternion algebra (\/\(\Char(F)\neq 2\)). 
\end{lemma}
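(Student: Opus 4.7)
My plan is to exhibit anti-commuting $\alpha, \beta \in A$ with $\alpha^2 = a \in F^\times$ and $\beta^2 = b \in F^\times$. Once these are found, $(1,\alpha,\beta,\alpha\beta)$ is linearly independent by the grading under $\sigma:=\Int(\alpha)$ (with $1,\alpha$ fixed and $\beta,\alpha\beta$ negated), hence a basis of the $4$-dimensional space $A$, and the multiplication relations force $A\simeq\rh_F(a,b)$.

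Reading $A$ as central simple (consistent with the conventions of the paper, since by definition quaternion algebras are central simple), Theorem~\ref{th3} gives $\deg_F(A)=2$, and Proposition~\ref{lemRedPc} provides the identity $x^2-\Trd(x)x+\Nrd(x)=0$ for every $x\in A$. Using $\Char F\neq 2$, the translation $y:=x-\Trd(x)/2$ lands in the $3$-dimensional subspace $A^0:=\ker\Trd$, on which $y^2=-\Nrd(y)\in F$. The first step is to find $\alpha\in A^0$ with $\alpha^2\in F^\times$. The quadratic form $q(y)=y^2$ on $A^0$ extends, via a splitting algebra for $A$ (Theorem~\ref{th3}), to the restriction of $-\det$ to trace-zero matrices of $\MM_2$, which is nondegenerate; so the Gram matrix of $q$ in any basis of $A^0$ is nonzero. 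Inspecting this Gram matrix, and using $\Char F\neq 2$ to polarise off-diagonal entries whenever all diagonal ones vanish, produces $\alpha\in A^0$ with $a:=\alpha^2\in F^\times$ and $\alpha\notin F$.

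The subalgebra $F[\alpha]=F\oplus F\alpha$ is $2$-dimensional. If $a$ is a square in $F$, then $F[\alpha]$ contains nontrivial orthogonal idempotents; Proposition~\ref{prop2} combined with Wedderburn's theorem~\ref{th1} forces $A\simeq\MM_2(F)\simeq\rh(1,1)$ by Theorem~\ref{thquaternions1}, and we are done. Otherwise $F[\alpha]$ is a field, and the double centraliser theorem~\ref{thBr3} gives $\rZ_A(F[\alpha])=F[\alpha]$. Then $\sigma=\Int(\alpha)$ is an $F$-automorphism of order $2$, and the eigenspace decomposition $A=A^+\oplus A^-$ (available since $\Char F\neq 2$) has $A^+=\rZ_A(\alpha)=F[\alpha]$ of dimension $2$ and $\dim A^-=2$.

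For every $\beta\in A^-$, $\sigma(\beta^2)=\beta^2$ puts $\beta^2\in F[\alpha]$, and writing $\beta^2=c+d\alpha$ and computing $\beta\beta^2=\beta^2\beta$ with $\alpha\beta=-\beta\alpha$ yields $2d\beta\alpha=0$, hence $d=0$ and $\beta^2\in F$. Pick a nonzero $\beta\in A^-$ (one exists since $A^+\neq A$). If $b:=\beta^2\in F^\times$, we obtain $A\simeq\rh(a,b)$ immediately. If $\beta^2=0$, then $\beta$ is a nonzero nilpotent and $A\beta$ is a proper nonzero left ideal (since $\beta\cdot\beta=0$ keeps $1$ out of $A\beta$), so Proposition~\ref{prop2} and Theorem~\ref{th1} reduce once more to $A\simeq\MM_2(F)\simeq\rh(1,1)$. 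The principal difficulty is the uniform dynamical handling of the degenerate branches where a square collapses to zero --- $a$ a square in $F$, or $\beta^2=0$ --- each time invoking a Wedderburn splitting, exactly in the spirit of the dynamical methods developed in the preceding section.
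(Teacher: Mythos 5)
Your argument is correct and lands on the same structural conclusion --- anticommuting $\alpha,\beta$ with $\alpha^2\in F^\times$, $\beta^2\in F$, and a dynamical fallback to $\MM_2(F)\simeq\rh(1,1)$ whenever a nilpotent or zero divisor appears --- but it is noticeably heavier than the paper's route. The paper picks any $z\in A\setminus F$, observes (dynamically) that $F[z]$ is a quadratic subfield, completes the square to obtain $x$ with $x^2=a\in F$, takes an eigenvector $y$ of $\Int(x)$ for the eigenvalue $-1$, and concludes with the basis $(1,x,y,xy)$; the degenerate branches are the same ones you split off explicitly. Your manufacture of $\alpha$ via Proposition~\ref{lemRedPc} and a nondegeneracy argument for $y\mapsto y^2$ on $\ker\Trd$ (through a splitting algebra from Theorem~\ref{th3}) is valid and has the merit of making the non-isotropy step and the degenerate cases fully explicit, but a single element and its minimal polynomial suffice; the downstream half of your proof coincides in substance with the paper's. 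One attribution to correct: for $\rZ_A(F[\alpha])=F[\alpha]$ you cite the double centraliser theorem~\ref{thBr3}, which requires a \emph{central simple} subalgebra, whereas $F[\alpha]$ is a proper commutative subfield; the fact you actually need --- a subfield $L$ with $[L:F]^2=[A:F]$ satisfies $\rZ_A(L)=L$ --- is Theorem~\ref{thBr4}. Alternatively, you can avoid centraliser theorems entirely: once $\beta$ and $\alpha\beta$ are shown independent in $A^-$ (because $c+d\alpha\neq0$ is invertible in the field $F[\alpha]$), a dimension count gives $\dim A^+=2=\dim F[\alpha]$ directly, which is in effect how the paper's basis argument runs.
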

%
\begin{proof}
Let \(z\in A\setminus F\); \(F[z]\) is a commutative algebra of \(F\)-dimension \(d=2\) since \(d\) divides~\(4\) and is neither~\(1\) nor~\(4\). Since \(\Char(F)\neq 2\) we have an~\(x\) such that \(F[z]=F[x]\) and \(x^2=a\in F\). The linear map \(\rho_x\colon v\mapsto x^{-1}vx\) is an automorphism of order \(\leq 2\). But \(\rho_x\neq \Id_A\) since \(xv\neq vx\) for some~\(v\). So there is an eigenvector~\(y\) such that \(\rho_x(y)=-y\) and we get an \(F\)-basis \((1,x,y,xy)\). Then \(y^2x=y(-xy)=-(yx)y=xy^2\), so \(y^2=b\) is in the center~\(F\). We conclude that \(A\simeq\rh(a,b)\). 
\end{proof}
%


\subsection{First step}

This step corresponds to \citealt[Lemma 1]{Becher} (Lemma \ref{lem1} below), and the proof will follow its main idea.

\begin{context} \label{cont1step}
In this first step we consider a quadratic field extension~\(K\) of~\(F\). Since \(\Char(F)\neq 2\)
we can write
\(K=F[\delta]\) with \(\delta^2=g\in F\) and \(\delta\notin F\). 
\end{context}
 
We have the \(F\)-automorphism \(\sigma\colon x+y\delta\mapsto x-y\delta\) of~\(K\) (\(x,y\in F\)). We use the notation \(\bar w=\sigma(w)\).

We consider an arbitrary quaternion \(K\)-algebra \(A = \rh_K(a+b\delta,c+d\delta)\) with \(a,b,c,d\in F\), \((a,b)\neq (0,0)\), and \((c,d)\neq (0,0) \).  We have \(a^2\neq b^2g\) and \(c^2\neq d^2g\)
(since \(\delta\notin F\)). We consider the \emph{dual} quaternion \(K\)-algebra \(\bar{A} = \rh_K(a-b\delta,c-d\delta)\). The generators of~\(A\) and \(\bar A\) satisfy the following equalities.
\[ 
\begin{array}{ccccc} 
u^2 = a + b\delta & v^2 = c + d\delta & uv = -vu.     \\[.3em] 
\bar{u}^2 = a - b\delta & \bar{v}^2 = c - d\delta & \bar{u}\bar{v} = -\bar{v}\bar{u}.   
 \end{array}
\]

 The \(K\)-semilinear map \(G: w\mapsto \bar w,\,A\to \bar A\) defined on the \(K\)-basis \((1_A,u,v,uv)\) by 
\[1_A\mapsto 1_{\bar A},\,  u \mapsto \bar u,\,  v\mapsto \bar v\hbox{ and }uv\mapsto \bar u \bar v\] 
 is an \(F\)-isomorphism from~\(A\) to \(\bar A\), i.e.\ when we identify~\(K\) with its image in~\(A\), it is an isomorphism of \(F\)-algebras and \(\ov{zw}=\bar{z}\bar w\) if \(z\in K\)
 and \(w\in A\). 

Let us consider the central simple \(K\)-algebra \(B:=A\otimes_K \bar{A}\).
The automorphism \(I_B\) of the \(F\)-vector space~\(B\) defined
by \(e\otimes f\mapsto G^{-1} (f)\otimes G(e)\) is a ring automorphism of order 2
which satisfies \(I_B(z)=\bar{z}\) if \(z\in K\).

Since \(I_B^2=\Id_B\), we have \(B=\Ker(I_B-\Id_B)\oplus \Ker(I_B+\Id_B)\).
Moreover, since \(I_B(\delta)=-\delta\), the \(K\)-linear map \(w\mapsto \delta w\) permutes the two summands.
So the invariant elements
form a sub-\(F\)-vector space of dimension \(16\): \(T=\Ker(I_B-\Id_B)\) is a sub-\(F\)-algebra such that \(B=T\oplus \delta T=K\otimes_F T\). By Corollary~\ref{corth2},  \(T\) is a central simple algebra of degree~\(4\) over~\(F\); it is called the \emph{corestriction of~\(A\) over~\(F\)}.

\begin{lemma}\label{lem11}
  \(T\) contains a quaternion subalgebra, and hence is the tensor product of
  two quaternion \(F\)-algebras.
\end{lemma}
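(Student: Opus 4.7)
The plan is to exhibit an explicit quaternion $F$-subalgebra $Q\subseteq T$; once this is done, Theorem~\ref{thBr3} (double centraliser) gives $T = Q\otimes_F Q'$ with $Q':=\rZ_T(Q)$ a $4$-dimensional central simple $F$-algebra, and Lemma~\ref{lemquaternion1} then ensures $Q'$ is itself a quaternion $F$-algebra. So the task reduces to producing two anticommuting elements of $T$ whose squares lie in $F^\times$.

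The first generator is $\xi := u\otimes\bar u$: one checks $I_B(\xi)=\xi$, so $\xi\in T$, and $\xi^2 = u^2\otimes\bar u^2 = (a+b\delta)(a-b\delta) = a^2-b^2g$, an element of $F^\times$ (since $(a,b)\neq(0,0)$ and $g$ is not a square in $F$). For the second generator I would try
$$\eta \;:=\; (u\otimes\bar v + v\otimes\bar u)\;+\;(uv\otimes 1 + 1\otimes\bar u\bar v).$$
Writing $\eta=\eta_1+\eta_2$ with these parenthesised halves, the involution $I_B$ swaps the two summands of each~$\eta_i$ (e.g.\ $I_B(u\otimes\bar v)=v\otimes\bar u$), so $\eta\in T$. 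A careful expansion, with $\zeta:=v\otimes\bar v$, yields $\eta_1^2 = 2(ac-bdg)-2\xi\zeta$ and $\eta_2^2 = -2(ac+bdg)+2\xi\zeta$, together with $\eta_1\eta_2+\eta_2\eta_1 = 0$; thus $\eta^2 = -4bdg\in F$. Moreover $\xi=u\otimes\bar u$ anticommutes with each of the four tensor summands of $\eta$ (in each of them exactly one of the two factors anticommutes with $u$, respectively $\bar u$, in its own algebra while the other commutes), so $\xi\eta=-\eta\xi$.

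If $bd\neq 0$, then $\eta^2\in F^\times$, both $\xi$ and $\eta$ are invertible, and $\eta\notin F$ since it is a sum of distinct basis vectors of~$B$; the standard quaternion argument then shows that $1,\xi,\eta,\xi\eta$ are $F$-linearly independent, so $Q:=F[\xi,\eta]$ is a $4$-dimensional central $F$-subalgebra of $T$, and a quaternion $F$-algebra by Lemma~\ref{lemquaternion1}.

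In the remaining case $b=0$ or $d=0$, the element $\eta$ is nonzero yet $\eta^2=0$, exhibiting a nonzero noninvertible element in the central simple algebra~$T$. The constructive Wedderburn theorem (Theorem~\ref{th1}) then yields $T\simeq\MM_q(B')$ with $q>1$ and $B'$ central simple over $F$; the relation $[T\!:\!F]=16 = q^2[B'\!:\!F]$ forces $q\in\{2,4\}$ and $[B'\!:\!F]\in\{4,1\}$, so $B'$ is $F$ itself or (by Lemma~\ref{lemquaternion1}) a quaternion $F$-algebra, and $T\simeq\MM_2(F)\otimes_F B'$ or $T\simeq\MM_2(F)\otimes_F\MM_2(F)$ displays $T$ as a tensor product of two quaternion $F$-algebras. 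The main obstacle is spotting the clean element~$\eta$: the natural candidate $u\otimes\bar v+v\otimes\bar u$ alone carries a stray $-2\xi\zeta$ in its square, and the insight is that $uv\otimes 1+1\otimes\bar u\bar v$ anticommutes with it and contributes the opposite stray term $+2\xi\zeta$, so that the two cancel.
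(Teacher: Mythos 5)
Your proof is correct, and it takes the same overall strategy as the paper's: exhibit an explicit pair of anticommuting elements of $T$ whose squares lie in $F$, then conclude with the double centraliser theorem (Theorem~\ref{thBr3}) and Lemma~\ref{lemquaternion1}. The difference is in the concrete generators. The paper uses $x = v\bar v$ and $y = (u+\bar{u})(cb-ad + d\, u\bar{u} + b\, v \bar{v})$, whereas you take $\xi = u\otimes\bar u$ and the more symmetric $\eta = u\otimes\bar v + v\otimes\bar u + uv\otimes 1 + 1\otimes\bar u\bar v$. Both choices make $I_B$-invariance transparent, and your computations ($\xi^2=a^2-b^2g$, $\eta^2=-4bdg$, $\eta_1\eta_2+\eta_2\eta_1=0$, $\xi\eta=-\eta\xi$) all check out. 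One genuine advantage of your version is that you treat the degenerate case $bd=0$ explicitly: you observe that $\eta$ remains a nonzero sum of four distinct basis tensors, so $\eta^2=0$ then produces a nonzero nilpotent in the degree-$4$ central simple algebra $T$, which you split via the constructive Wedderburn theorem (Theorem~\ref{th1}) and the dimension count $q^2[B'\!:\!F]=16$. The paper's proof, as written, leaves the analogous degeneracy implicit: indeed its $y$ vanishes when $b=0$ (one has $y = d(u+\bar u)(u\bar u-a)$ and $(u+\bar u)\,u\bar u = a(u+\bar u)$ once $b=0$), so the paper is implicitly normalising, say by swapping the two slots of $\rh_K$, and leaving the case $b=d=0$, where $A$ descends to $F$ and $T$ is obviously a tensor of two quaternion algebras, to the reader. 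Your case analysis makes this step explicit, which is the more careful rendering.

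One small remark: for the $bd\neq0$ case you invoke Lemma~\ref{lemquaternion1} to identify $Q=F[\xi,\eta]$; this is fine, but it is even more direct to observe that $\xi^2,\eta^2\in F^\times$ and $\xi\eta=-\eta\xi$ give an isomorphism $Q\simeq\rh_F(\xi^2,\eta^2)$ by definition of a quaternion algebra, so that Lemma~\ref{lemquaternion1} is only really needed for the centraliser $\rZ_T(Q)$.
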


\begin{proof}
We take \(x = v\bar{v}\) and \(y = (u+\bar{u})(cb-ad + d u\bar{u} + b v \bar{v})\)
and we verify directly that \(x^2\) and~\(y^2\) belong to~\(F\) and \(xy=-yx\).
The consequence is obtained by applying  Theorem \ref{thBr3} and Lemma~\ref{lemquaternion1}.
\end{proof}

\begin{lemma}\label{lem12}
  If 
  \(A\otimes_K\bar{A}\) is split, then \(A\) is the scalar extension of a quaternion
  \(F\)-algebra. 
\end{lemma}



\begin{proof}
  Since \(A\otimes_K \bar{A}\) is split, it is isomorphic to \(A\otimes_K A\op\)  (Theorem \ref{th2})
  and we get a \(K\)-algebra isomorphism  \(J\colon A\rightarrow \bar{A}\op\).
  Composing~\(J\) with the \(F\)-algebra isomorphism \(\bar{A}\rightarrow A\) we get
  an \(F\)-linear isomorphism \(H:A\rightarrow A\op\) satisfying \(H(z) = \bar{z}\) if \(z\in K\) and \(H(xy) = H(y)H(x)\) for all \(x,y\in A\).
   Consider   the canonical \(K\)-linear involution \(I\colon A\rightarrow A\) defined by
  \(I(u) = -u,~I(v) = -v,~I(uv) = -uv=I(v)I(u)\).
  It is clear that \(C := \Ker(H-I)\) is a  sub-\(F\)-algebra of~\(A\) and \(\delta C\subseteq \Ker(H+I)\).\\
  Assume \(HI=IH\); then \(C\oplus \delta C =  A\)  since in this case \(0=(H+I)(H-I)\) and hence \(A=\Ker(H-I)\oplus\Ker(H+I)\).
  We thus get that \(A=K\otimes_F C\) is an extension of \(C\) which is a quaternion \(F\)-algebra by Lemma~\ref{lemquaternion1}.
\\
It remains to prove that \(HI=IH\). For~\(x\) in \(A\setminus K\), the element \(I(x)\) is the unique element in~\(A\)
such that both \(x+I(x)\) and \(xI(x) = I(x)x\) are in~\(K\).\footnote{The commutative \(K\)-algebra \(K_1=K[x]\) is a quadratic field extension of~\(K\) and the conjugate of~\(x\) for the corresponding \(K\)-automorphism of \(K_1\) is the unique  element~\(y\) of \(K_1\) satisfying \(x+y\in K\) and \(xy=yx\in K\). In fact, \(K=\rZ_A(K)\), so~\(y\) is also the unique element of~\(A\) satisfying these properties.
Finally, a direct computation shows that \(x+I(x)\in K\) and \(xI(x)=I(x)x\in K\).}
Then \(H(x+I(x)) = H(x) + H(I(x))\in K\) and \(H(x) H(I(x))= H(I(x)x) = H(xI(x)) = H(I(x))H(x)\in K\), and
so \(H(I(x)) = I(H(x))\). For~\(x\) in~\(K\) we have \(H(I(x)) = \sigma(x) = I(H(x))\).
\end{proof}

\begin{definition}[\citealt{Lam}, Chapter XI, Section 4] \label{defiA2} We say that a field~\(F\) satisfies
 \emph{Property \(\Adeux\)} if \(F\) splits every quaternion \(F\)-algebra. 
\end{definition}


\begin{lemma}\label{lem1} 
If  \(F\) satisfies Property \(\Adeux\) then so does~\(K\).
\end{lemma}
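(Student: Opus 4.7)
The plan is simply to compose Lemmas \ref{lem11} and \ref{lem12}. Given an arbitrary quaternion $K$-algebra $A$, I would first write $A = \rh_K(\alpha,\beta)$ with $\alpha,\beta \in K^\times$ and decompose $\alpha = a + b\delta$, $\beta = c + d\delta$ with $a,b,c,d \in F$. Because $\alpha,\beta \neq 0$, we have $(a,b)\neq (0,0)$ and $(c,d)\neq(0,0)$, so the setup of Context \ref{cont1step} applies to $A$.

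Next, I would invoke Lemma \ref{lem11}: the corestriction $T$ of $A$ over $F$ is a tensor product $T \simeq Q_1 \otimes_F Q_2$ of two quaternion $F$-algebras. Since $F$ enjoys Property $\Adeux$, both $Q_1$ and $Q_2$ are split over $F$, i.e.\ isomorphic to $\MM_2(F)$. Therefore
\[
T \simeq \MM_2(F) \otimes_F \MM_2(F) \simeq \MM_4(F),
\]
and so $T$ is split over $F$. Consequently
\[
A \otimes_K \bar A \;=\; B \;=\; K \otimes_F T \;\simeq\; K \otimes_F \MM_4(F) \;\simeq\; \MM_4(K),
\]
which shows that $A \otimes_K \bar A$ is split over $K$.

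Finally, Lemma \ref{lem12} (whose hypotheses are precisely that $F$ has Property $\Adeux$ and that $A \otimes_K \bar A$ is split) yields that $A$ itself is split over $K$. Since $A$ was an arbitrary quaternion $K$-algebra, we conclude that $K$ satisfies Property $\Adeux$. I do not anticipate any real obstacle here, as the delicate content has already been packaged into Lemmas \ref{lem11} and \ref{lem12}; the only minor point to be careful about is to verify that every quaternion $K$-algebra really does fit into the form handled in Context \ref{cont1step}, which is immediate.
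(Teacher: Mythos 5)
Your proposal is correct and follows exactly the same route as the paper: apply Lemma~\ref{lem11} to split the corestriction $T$ (hence $B = K\otimes_F T$) and then apply Lemma~\ref{lem12} to split $A$ itself. You simply spell out a few more intermediate steps, but the argument is the same.
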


Note also that this can be interpreted as follows: if any conic over~\(F\) has a rational point, then the same holds for~\(K\).

\begin{proof}
We consider a quaternion algebra~\(A\) over~\(K\).
Using Lemma \ref{lem12} it is enough to show that~{\(B=A\otimes_K \bar{A}\)} is split. Lemma \ref{lem11} says that \(T\) is the tensor product of two quaternion \(F\)-algebras, so \(T\) is split,
and the result follows from the fact that \(B = K\otimes_F T\).
\end{proof}

 \subsection{Second step}

 This corresponds to \citealt[Proposition]{Becher}, and the proof can be taken as it stands.

 \begin{lemma}\label{lem2}
   Let \(K\) be a discrete field 
   extension of~\(F\) with a primitive element~\(t\)  of degree \(N>2\). \\
   Assume that any polynomial
   of degree \(<N\) has a root in~\(F\); then \(K\) satisfies Property \(\Adeux\).
 \end{lemma}

 \begin{proof}
Note that the hypothesis implies in particular that any element of~\(F\) is a square. 
   We write \(K = F[t]\). Any element of~\(K\) can be written in the form \(f(t)\) with~\(f\)  of degree \(<N\), 
and hence can be written as \(f(t)=c(t-a_1)\cdots (t-a_k)\)
   with \(c\)  a square. Moreover, any quaternion algebra \(\rh_K(t-a,t-b)\) is split since \(b-a\)
   is a square: the conic \((t-a)x^2+(t-b)y^2-z^2\) has a nontrivial zero \((1,\iota,u)\) where \(\iota^2=-1\) and \(u^2=b-a\).
   It follows that in general \(\rh_K(f(t),g(t))\) is split: if \(g(t)=d(t-b_1)\cdots (t-b_\ell)\),  writing
   \((u,v)\) for  \([\rh_K(u,v)]^\Br\) we get\footnote{Note that Theorem \ref{thquaternions1}
   shows that \((uu',v)=(u,v) (u',v)\) and \((u,vv')=(u,v) (u,v')\).} 
   \[(f(t),g(t))=\prod\nolimits_{j=1}^\ell(c,t-b_j) \prod\nolimits_{i=1}^k(t-a_i,d) \prod\nolimits_{i=1}^k\prod\nolimits_{j=1}^\ell(t-a_i,t-b_j) \]
   and all the terms in the products vanish.
 \end{proof}
 
 
 
\subsection{Third step}

 This step corresponds to \citealt[Lemma 4]{Becher}, but the proof is different.

 \begin{lemma}\label{lem41}
 Assume that every element of~\(F\) is a square. If \(A\in\Br_2(F)\)  then \(A\) is split or it has a symplectic involution. 
 \end{lemma}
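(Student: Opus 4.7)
The plan is to construct, by induction on $\deg_F(A)$ in tandem with the constructive Wedderburn procedure (Theorem~\ref{th1}), either a splitting of $A$ or a symplectic involution. The base case $\deg_F(A)=1$ is trivial.

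First I would set up an anti-automorphism. Since $[A]\cdot[A^{op}]=1$ always and $[A]^2=1$ by hypothesis, we have $[A]=[A^{op}]$, so $A\sim_F A^{op}$. Because $[A^{op}:F]=[A:F]$, Theorem~\ref{thBr1}(2)(b) gives a $F$-algebra isomorphism, i.e.\ an anti-automorphism $\tau\colon A\to A$. By Skolem-Noether (Theorem~\ref{thSkNo}) applied to $\tau^2$, we have $\tau^2=\Int(u)$ for some $u\in A^\times$. Comparing $\tau\circ\tau^2$ with $\tau^2\circ\tau$ yields $u\tau(u)=\lambda\in F^\times$. Using that every element of $F$ is a square, choose $\nu\in F^\times$ with $\nu^2=\lambda$ and replace $u$ by $\nu^{-1}u$; we may thus assume $\tau(u)=u^{-1}$.

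Next I would produce a candidate involution. Define the $F$-linear map $\psi\colon A\to A$ by $\psi(c)=u^{-1}\tau(c)$. A direct computation using $\tau^2(c)=ucu^{-1}$ and $\tau(u)=u^{-1}$ gives $\psi^2=\Id$, so $A=A^+\oplus A^-$ where $A^\pm$ is the $\pm1$-eigenspace (equivalently, $\tau(c)=\pm uc$). For any invertible $c\in A^\pm$ one verifies that $J_c(a):=c\tau(a)c^{-1}$ is an involution of the first kind, by expanding $J_c^2(a)=c\tau(c)^{-1}uau^{-1}\tau(c)c^{-1}$ and substituting $\tau(c)=\pm uc$. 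By the split-case analysis of Lemma~\ref{lemInv1stkind} (where $u=1$, $A^+$ is the symmetric and $A^-$ the skew part), the involution $J_c$ obtained from $c\in A^-$ is symplectic.

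The crux is therefore to produce an invertible element $c\in A^-$ constructively. Note $A^-\neq0$ unless $\tau(c)=uc$ for every $c$, which anti-multiplicativity combined with $\tau(u)=u^{-1}$ rules out when $A\neq F$. Pick any nonzero $c\in A^-$ and apply Lemma~\ref{lemAlg4}: either $c$ is invertible, and $J_c$ is the desired symplectic involution, or we obtain a nontrivial idempotent in $A$. In the latter case, Theorem~\ref{th1} gives $A\simeq \MM_q(B)$ with $q>1$ and $B$ central simple; from $[A]=[B]$ we deduce $[B]\in\Br_2(F)$, and $\deg_F(B)<\deg_F(A)$, so by induction $B$ is split or admits a symplectic involution $J_B$. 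If $B$ is split, so is $A$; if $J_B$ is symplectic, the conjugate-transpose involution $(a_{ij})\mapsto(J_B(a_{ji}))$ on $\MM_q(B)\simeq A$ is again symplectic, as the dimension count $q\cdot\dim_F(B_{J_B}^+)+\binom{q}{2}\dim_F(B)=q\cdot\frac{s(s-1)}{2}+\binom{q}{2}s^2=\frac{n(n-1)}{2}$ shows (with $s=\deg B$, $n=qs=\deg A$).

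The main obstacle is the type identification: ensuring that picking $c\in A^-$ rather than $c\in A^+$ yields the symplectic and not the orthogonal branch, and that this persists through the inductive/dynamical reduction. In the direct construction this is handled by dynamically splitting $A$ and transporting the split-case identification of $A^-$ with the skew part; in the inductive step it is the dimension count above. Together these give the desired dichotomy.
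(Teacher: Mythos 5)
Your proof follows the same skeleton as the paper's: anti-automorphism $\tau$ from $[A]=[A^{op}]$ via Theorem~\ref{thBr1}(2)(b), Skolem--Noether giving $\tau^2=\Int(u)$, the observation $u\tau(u)\in F^\times$, a square-root rescaling, and the inductive dichotomy via a noninvertible element. Your eigenspace reformulation (working with $\psi(c)=u^{-1}\tau(c)$ and picking $c$ in an eigenspace) is a nice repackaging of the paper's more explicit choice $\beta=1+\alpha$; and your explicit conjugate-transpose construction of a symplectic involution on $\MM_q(B)$, with the dimension count $q\cdot\frac{s(s-1)}{2}+\binom{q}{2}s^2=\frac{n(n-1)}{2}$, makes precise the inductive step that the paper compresses into ``done by induction.''

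However, there is a genuine gap in the step where you assert that an invertible $c\in A^-$ yields a \emph{symplectic} involution. The rescaling $u\mapsto\nu^{-1}u$ with $\nu^2=\lambda$ determines the new $u$ only up to sign (you may pick $\nu$ or $-\nu$), and the two choices swap the eigenspaces $A^+$ and $A^-$. In the split model $A_L\simeq\MM_n(L)$, $\tau_L=\Int(v)\circ t$, the eigenspaces of $\psi$ are $\{c:(cv)^t=\mp cv\}$, of dimensions $n(n-1)/2$ and $n(n+1)/2$; an invertible $c$ gives $J_c=\Int(cv)\circ t$, which is symplectic precisely when $cv$ is skew, i.e.\ when $c$ lies in the eigenspace of dimension $n(n-1)/2$. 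Nothing in your normalisation guarantees this is $A^-$ rather than $A^+$. You gesture at ``transporting the split-case identification,'' but as written this does not select the correct eigenspace. The fix is what the paper does explicitly: after producing an involution $J'$ (which may be orthogonal), observe that $[A_{J'}^-:F]=n(n-1)/2\geq 3$, pick a nonzero $y$ there, and either split by induction if $y$ is noninvertible, or pass to the symplectic involution $\Int(y)\circ J'$; equivalently, in your language, compute $\dim_F A^-$ and if it equals $n(n+1)/2$, switch to $A^+$.
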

\noindent N.B.: In the last case, \(\deg(A)\) is even. So, if \(\deg(A)\) is odd, \(A\) is split.

 \begin{proof} Let \(N:=\deg(A)\). We use an induction on~\(N\), the case \(N=1\) being trivial. In the case \(N=2\) we have a quaternion algebra (Lemma \ref{lemquaternion1}) and it is split (Theorem \ref{thquaternions1} item \emph{3}). Now we assume \(N>2\). 
The hypothesis \([A]^2=1\) in \(\Br(F)\) means that \(A\otimes_F A\) is split (Theorem \ref{thBr1} item \emph{2a}), hence isomorphic to \(A\otimes_F A\op\) and we have an isomorphism
   \(J\colon A\rightarrow A\op\) \cha{(Theorem \ref{thBr1} item \emph{2b})}.  So \(J^2\) is an automorphism of the \(F\)-algebra~\(A\) and hence, by the Skolem-Noether theorem, there is
an element \(\alpha\) in \(A^{\times}\) such that \(J^2 = \Int(\alpha^{-1})\). 
We have for all \(x\in A\) 
\[ \alpha^{-1} J(x) \alpha =J^2(J(x))= 
J^3(x)= J(J^2(x))=
 J(\alpha^{-1} x \alpha) =J(\alpha ) J(x) J(\alpha )^{-1},
\] 
so 
\(t=\alpha J(\alpha )\in \rZ_F(A)=F\).  If \(\alpha\in F\) we have finished: \(J\) is an involution.
Now, if \(\alpha\notin F\), since \(t=a^2\) with \(a\in F^\times\), we replace \(\alpha\) with \(a^{-1}\alpha\) and  get \(\alpha J(\alpha )=1\). We
 seek \(\beta\in A^\times\) such that \(\Int(\beta)\circ J\) is an involution:  
\[(\Int(\beta)\circ J)^2(x)=
\beta J \big(\beta J(x)\beta^{-1} \big)\beta^{-1}=
\beta J(\beta^{-1}) J^2(x) J(\beta)\beta^{-1}=
\beta J(\beta^{-1}) \alpha^{-1} x \alpha J(\beta)\beta^{-1}=\gamma^{-1}x\gamma. 
\]
So we need that \(\gamma=\alpha J(\beta)\beta^{-1}\in F\). Let \(\beta=1+\alpha\): \(\beta\) is nonzero since \(\alpha\notin F\). If \(\beta\) is noninvertible, \(A=\MM_q(B)\) for some~\(B\) and \(q>1\) and we are done by induction.  If \(\beta\) is invertible, we are done  since \(\alpha J(1+ \alpha)=\alpha(1+J(\alpha))=\alpha+1=\beta\).\\
 If the involution \(J'\) we have found is symplectic we have finished. If \(J'\)
is orthogonal, since \([A_{J'}^-:F]=N(N-1)/2\geq 3\), there exists a nonzero \(y\in A\) such that \(y+J'(y)=0\). If \(y\) is noninvertible, we are done by induction. If \(y\) is invertible, \(I=\Int(y)\circ J'\) is a symplectic involution. In fact, \(I\) is clearly an involution and 
\[
a=I(a)\iff ay=yJ'(a)\iff ay=-J'(y)J'(a) \iff ay=-J'(ay) \iff ay\in A_{J'}^-,
\] 
so that \([A_{J'}^-:F]=[A_I^+:F]=N(N-1)/2\).
\end{proof}

\begin{lemma}\label{lem4-}
Let \(N>2\) and assume that every monic polynomial in \(F[X]\) of degree \(< N\) has a root in~\(F\).
Then any central simple \(F\)-algebra~\(B\) of degree \(<N\) is split.
\end{lemma}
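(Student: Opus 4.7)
The plan is to proceed by induction on $n := \deg_F(B) < N$, following the same overall strategy as the proof of Lemma \ref{algclos}. The base case $n = 1$ is trivial since then $B = F$ is split. For the inductive step, I would exhibit a nonzero noninvertible element of $B$ and then invoke Wedderburn's theorem (Theorem \ref{th1}) to write $B \simeq \MM_q(C)$ with $q > 1$ and $C$ a central simple $F$-algebra; since $q \cdot \deg_F(C) = n$, one has $\deg_F(C) < n < N$, so by the inductive hypothesis $C$ is split, and hence so is $B$.

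To produce the desired element, I would pick a basis vector $\vep_2$ of $B$ lying outside $F$, which is possible because $n > 1$ forces $[B:F] = n^2 > 1$. The reduced characteristic polynomial $P := \Cprd_{B/F}(\vep_2) \in F[T]$, furnished by Proposition and Definition \ref{lemRedPc}, is monic of degree exactly $n < N$. By the hypothesis of the lemma, $P$ has a root $\lambda \in F$. Set $a := \vep_2 - \lambda$. Then $a \neq 0$ since $\vep_2 \notin F$. A direct computation using $\rL_a = \rL_{\vep_2} - \lambda \cdot \Id$ yields $\Cprd_{B/F}(a)(T) = P(T + \lambda)$, whose value at $0$ is $P(\lambda) = 0$; hence $\Nrd(a) = 0$ and Lemma \ref{lemNrd} gives that $a$ is not invertible.

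The decisive technical point is the degree bound: the characteristic polynomial of $\rL_{\vep_2}$ has degree $n^2$, which could exceed $N$ and render the hypothesis of the lemma useless. Working with the reduced characteristic polynomial $\Cprd$ rather than $\Cp$ brings the degree down to exactly $n$, and it is this reduction that makes the hypothesis bite. Once $a$ is in hand, Wedderburn's theorem produces the matrix decomposition $B \simeq \MM_q(C)$ with $q > 1$ automatically, and the induction closes. The entire argument is constructive and relies only on results already established earlier in the paper, namely Proposition and Definition \ref{lemRedPc}, Lemma \ref{lemNrd}, and Wedderburn's theorem in its constructive form (Theorem \ref{th1}).
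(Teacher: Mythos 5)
Your proof is correct and closes the same way the paper intends: exhibit a nonzero noninvertible element, apply the constructive Wedderburn theorem~\ref{th1} to get $B\simeq\MM_q(C)$ with $q>1$, and induct on the degree. The paper is terse here, saying only ``Same proof as in Lemma~\ref{algclos}''; that proof works with the \emph{minimal polynomial} of $\vep_2$, factors out a root $a_1\in F$, and concludes that $\vep_2-a_1\vep_1$ is a left zero divisor, hence noninvertible by Lemma~\ref{lemAlg4}. You instead work with $\Cprd_{B/F}(\vep_2)$ directly and conclude noninvertibility from $\Nrd(a)=0$ via Lemma~\ref{lemNrd}. The two routes are close: the minimal polynomial of $\vep_2$ divides $\Cprd_{B/F}(\vep_2)$ by Proposition~\ref{lemRedPc}, so in both versions it is Proposition~\ref{lemRedPc} that supplies the degree bound $<N$ allowing the hypothesis on $F$ to bite --- the one genuinely new difficulty compared with Lemma~\ref{algclos}, where algebraic closure makes the degree irrelevant. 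The paper's one-line proof leaves this point implicit; you identify and handle it explicitly, which is the right thing to do. Your use of the translation identity $\Cprd(\vep_2-\lambda)(T)=\Cprd(\vep_2)(T+\lambda)$ (obtained from $\Cp$ by taking the unique $n$-th root) and of $\Nrd(a)=(-1)^n\Cprd(a)(0)$ is sound.
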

\begin{proof} \cha{Same proof as in Lemma \ref{algclos}}.
\end{proof}
%

\begin{lemma}\label{lem4}
Let \(N>2\) and assume that every polynomial of degree \(\leq \sup(2,N/2)\) has a root in~\(F\).
Let~\(K\) be an extension of degree~\(N\) of~\(F\).
Let \(A\in \Br_2(F)\) such that \(A\) is split by~\(K\).
Then \(A\) is already split by~\(F\).
\end{lemma}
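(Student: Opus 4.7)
The plan is to replace $A$ by a Brauer-equivalent algebra $B$ of degree $N$ containing $K$ as a maximal subfield (via Lemma~\ref{lemBr4}), then use Lemma~\ref{lem41} to either split $B$ outright or to obtain a symplectic involution. In the symplectic case I shall produce a nonzero noninvertible element of $B$ via the Pfaffian characteristic polynomial, use Wedderburn to write $B \simeq \MM_q(C)$ with $\deg_F(C) \leq N/2$, and finish by applying Lemma~\ref{lem4-} to $C$.

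First, observe that since $\sup(2,N/2)\geq 2$, the standing hypothesis gives $X^2-t$ a root for every $t\in F$, so every element of $F$ is a square. Thus Lemma~\ref{lem41} applies to $B\in\Br_2(F)$: either $B$ is split, whence $A\sim_F B$ is split and we are done, or $B$ carries a symplectic involution $\tau$, in which case $N$ must be even and hence $N\geq 4$. In the symplectic case I pick some $a\in B_\tau^+\setminus F$, which is possible because $\dim_F B_\tau^+=N(N-1)/2>1$. The Pfaffian characteristic polynomial $\Cpf(a)(T)\in F[T]$ has degree $N/2\leq \sup(2,N/2)$, so by hypothesis it admits a root $\lambda\in F$. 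Using the translation identity $\Cprd(a-\lambda)(T)=\Cprd(a)(T+\lambda)$ (established in the split case by working with a splitting algebra of $B$ and transferred by uniqueness of $\Cprd$), one has
$$\Nrd(a-\lambda)=(-1)^N\Cprd(a-\lambda)(0)=\Cprd(a)(\lambda)=\Cpf(a)(\lambda)^2=0,$$
so by Lemma~\ref{lemNrd} the element $a-\lambda$ is noninvertible; moreover $a\notin F$ forces $a-\lambda\neq 0$.

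Having produced a nonzero noninvertible element in $B$, Wedderburn's theorem~\ref{th1} gives $B\simeq \MM_q(C)$ with $q\geq 2$ and $C$ central simple over $F$ of degree $N/q\leq N/2$. To finish I apply Lemma~\ref{lem4-} with parameter $N':=N/2+1$: since $N\geq 4$ we have $N'>2$, and the assumption ``every polynomial of degree $<N'$ has a root in $F$'' is precisely the standing hypothesis, so $C$, of degree $\leq N/2<N'$, is split; therefore $A\sim_F C$ is split. The main obstacle I anticipate is the Pfaffian translation computation---in particular, justifying the shift identity for $\Cprd$ and the sign bookkeeping relating $\Nrd(a-\lambda)$ to $\Cpf(a)(\lambda)^2$---for which descent through a splitting algebra of $B$ is the natural tool; the remainder is routine assembly of the previously established lemmas.
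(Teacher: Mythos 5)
Your proof is correct and follows the same overall architecture as the paper's: pass to $B\sim_F A$ of degree $N$ containing $K$ as a maximal subfield via Lemma~\ref{lemBr4}, invoke Lemma~\ref{lem41} to split or obtain a symplectic involution, produce a nonzero noninvertible element from the Pfaffian characteristic polynomial of some $a\in B^+\setminus F$, apply Wedderburn (Theorem~\ref{th1}) to drop the degree, and finish with Lemma~\ref{lem4-}. The one place you deviate is in how the noninvertible element is extracted: the paper uses the Pfaffian Cayley--Hamilton identity $\Cpf(x)(x)=0$ from \citet{BOI}, factors $\Cpf(x)$ completely into linear factors $(T-\lambda_i)$ using the root hypothesis, and argues as in Lemma~\ref{algclos} that one of the factors $x-\lambda_i$ is noninvertible; you instead find a single root $\lambda$ of $\Cpf(a)$ and compute $\Nrd(a-\lambda)=\pm\Cprd(a)(\lambda)=\pm\Cpf(a)(\lambda)^2=0$ via the translation identity $\Cprd(a-\lambda)(T)=\Cprd(a)(T+\lambda)$, then conclude by Lemma~\ref{lemNrd}. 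Your variant is arguably a touch leaner --- it avoids the full Cayley--Hamilton for $\Cpf$, needing only $\Cprd=\Cpf^2$ from Proposition~\ref{propInv1stkind} plus the easy translation formula --- but it achieves the same thing. Both rely on $N$ even and $\geq 4$ to ensure $\deg\Cpf(a)=N/2\leq\sup(2,N/2)$ and $\dim_F B^+>1$. Your explicit choice $N'=N/2+1$ when invoking Lemma~\ref{lem4-} is clean and makes precise what the paper leaves slightly implicit there.
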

 
\begin{proof} 
 We can find an algebra \(B\sim_F A\) which contains~\(K\) as a maximal subfield  with \(\deg_F(B)=N\) by Lemma~\ref{lemBr4}. We have to show that \(B\) is split by~\(F\). 
 Using Lemma \ref{lem41}, either \(B\) is split, or \(N\) is even and we find a symplectic involution~\(J\) on~\(B\). We have \([B_J^+:F]=N(N-1)/2\geq 6\), 
and any \(x\in B_J^+\setminus F\) satisfies \(\Cpf(x)(x)=0\) with \(\deg(\Cpf(x))=N/2\). As in Lemma \ref{algclos} we find a nonzero noninvertible element in~\(B\). So \(B=\MM_r(B_1)\) with \(r>1\), and Lemma \ref{lem4-} applies for \(B_1\) since \(\deg_F(B_1)\leq  N/2<N\). 
\end{proof}
 


\subsection{Constructive versions of Becher's Theorem.}




We first combine Lemmas \ref{lem1}, \ref{lem2} and \ref{lem4} to get the following.

\begin{lemma}\label{lem5}
  Let \(F\) be a discrete field of characteristic \(\neq 2\) and \(K = F[y,x_1,\dots,x_n]\) a triangular field extension, where $y$ is of degree \(N>2\) and
  \(x_i\) of degree \(2\). Assume that every polynomial of degree \(<N\) has a root in \(F\), then an element of \(\Br_2(F)\) split by \(K\) is already split by \(F\).
\end{lemma}

We can now prove a constructive version of Becher's Theorem \ref{thBecher}.

\begin{theorem} \label{thBecher2}
Let \(F\) be a discrete field of characteristic \(\neq 2\). Every element of \(\Br_2(F)\) is split by a commutative \(F\)-algebra~\(K\) obtained by a tower of quadratic algebra extensions. 
\end{theorem}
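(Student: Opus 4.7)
The plan is to follow, step by step, the strategy of the proof of Theorem~\ref{thBecher1}, but to reinterpret every ``field extension'' as a ``commutative algebra extension'' via the dynamical method, so that obstacles (nonzero noninvertible elements) are treated by passing to a quotient rather than by factoring polynomials. The starting point remains Theorem~\ref{th3}, which, unlike Theorem~\ref{th30}, does not require $F$ to be fully factorial: it produces directly a commutative $F$-algebra $K_0 = F[x_1,\dots,x_\ell]$ in triangular form that splits $A$, and we extract from it a splitting sequence $(n_1,\dots,n_\ell)$ of the degrees of the successive monic polynomials, exactly as at the start of the proof of Theorem~\ref{thBecher1}.

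Next I would re-run the $\omega^{\omega}$ well-founded recursion: given a splitting sequence of the form $(\sigma, N, 2, \dots, 2)$ with $N>2$, we aim to replace it by a sequence $(\sigma, m_1, \dots, m_r)$ with every $m_j < N$. The three key subroutines, Corollaries~\ref{remcor1}, \ref{remprop} and~\ref{remlem4}, provide this reduction. Crucially, their proofs never use that the adjoined polynomials are irreducible: they only adjoin roots, quotienting by monic polynomials. Hence, read dynamically, each such step is an algebra extension of the form $B[y]/\gen{p(y)}$, and when during the computation a nonzero noninvertible element $z$ surfaces, we pass to the quotient $B/\gen{z}$ and continue on each branch; the combination of the finitely many resulting branches is again a commutative algebra extension of the same triangular shape, and it splits $A$ branchwise.

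Then I would verify that the algebra produced at termination is indeed a tower of \emph{quadratic} algebra extensions. Since the recursion strictly decreases the multiset of degrees in the lexicographic order on multisets (which is of type $\omega^{\omega}$), after finitely many steps every polynomial that has been adjoined is of degree~$\le 2$; as $\Char(F) \neq 2$, we may complete the square and put each such generator in the form $Y^2 - a$. A finite product of towers of quadratic algebra extensions is again a tower of quadratic algebra extensions, so the dynamical branch-splitting does not spoil the conclusion.

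The main obstacle I expect is purely bookkeeping: one must make sure that each of Corollaries~\ref{remcor1}, \ref{remprop} and~\ref{remlem4} is being applied in its \emph{algebraic} (not field-theoretic) reading, that the dynamical quotients occur only when genuinely needed, and that the recursive combination of branches preserves the tower-of-quadratic-extensions structure and the splitting of $A$. Once this bookkeeping is arranged, the argument is a verbatim transcription of the proof of Theorem~\ref{thBecher1}, and the same $\omega^{\omega}$ termination argument guarantees that the construction halts after finitely many steps, yielding the desired commutative $F$-algebra $K$.
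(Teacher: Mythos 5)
Your proposal is correct and is essentially an expansion of the paper's own (very terse) proof, which simply says the result ``follows from the proof of Theorem~\ref{thBecher1}, by using the dynamical way of transforming the algorithms.'' You correctly identify that the starting point shifts to Theorem~\ref{th3} rather than~\ref{th30}, that Corollaries~\ref{remcor1}, \ref{remprop}, and~\ref{remlem4} only ever adjoin roots of monic polynomials without needing irreducibility, and that the same $\omega^{\omega}$ termination argument carries over once obstacles are handled by quotienting — precisely the content compressed into the paper's one-line proof.
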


\begin{proof}
We can combine the results of the three previous subsections to reduce a splitting sequence to a quadratic splitting sequence.
 
The argument will be the following: we start with a commutative \(F\)-algebra \(K=F[x_1,\dots,x_\ell]\) which splits~\(A\), given explicitly by Theorem~\ref{th3}. Each \(x_i\) is a root of a monic polynomial of degree \(n_i\) over \(F[(x_j)_{j<i}]\). We say that \((n_1, \dots, n_\ell)\)
 is a splitting sequence for \(A_F\). Our aim is to replace the sequence \((n_1,\dots,n_\ell)\) by a finite sequence of~\(2\)'s. 

If we have \(n_j = 2\) for all~\(j\), 
then there is nothing to do. Otherwise
we write the sequence on the form \((\sigma, N, 2, \dots, 2)\) with \(N>2\)
and we show that we can find another splitting sequence \((\sigma, m_1,\dots,m_r)\) where all \(m_j\) are \(<N\). This means that the new sequence has strictly decreased for an order type \(\omega^\omega\) given by \((p_1,\dots,p_r)>(q_1,\dots,q_s)\) if the multiset defined by \((p_1,\dots,p_r)\) is strictly greater than the multiset defined by \((q_1,\dots,q_s)\) for the lexicographic order.\footnote{E.g.\ the multiset \([5,5,5,4,3,3,2,2]\) is greater than the multisets \([5,5,4,4,4,4,3,3,3,3,3,3,3,2,2,2,2,2,2,2]\) and \([5,5,5,4,3,2,2,2,2,2,2,2,2]\).} So our algorithm stops in a finite number of steps.

Now we explain how we manage to replace \((\sigma, N, 2, \dots, 2)\) by  
\((\sigma, m_1, \dots, m_r)\). 

If some monic polynomial in this sequence has some proper factor, we can reduce the sequence. So we can assume that each polynomial is irreducible, i.e.\ the
extension \(K\) is a field extension.

\smallskip 
Let \(K_1\) be the commutative \(F\)-algebra constructed with the sequence \((\sigma)\) and \(K_2=K_1[y]\) the commutative \(F\)-algebra constructed with the sequence \((\sigma,N)\). The commutative \(F\)-algebra \(K_2\) splits~\(A\) after a sequence of quadratic extensions. In the following argument, we may assume dynamically that~\(K_1\) and~\(K_2\) are fields.

 
Using Lemma \ref{lem5}, we can extend \(K_1\) by adding some roots of polynomials of degree~\(<N\) in order that the corresponding extension splits~\(A\). 
\end{proof}



Note that if, after the end of our algorithm/construction, we find a nonzero noninvertible element in~\(K\), we are able to simplify the tower by cancelling one or several (useless) extensions \(K_{i+1}\simeq K_i\times K_i\).


\smallskip Theorem \ref{thBecher2} is directly equivalent to Theorem \ref{thBecher} in classical mathematics.


\section{Fully factorial fields}

A discrete field~\(K\) is said to be \emph{fully factorial} when any polynomial in \(L[X]\) with \(L\)  any strictly finite field extension of~\(K\) can be factorised into irreducible factors \citep[see][Chapter VII]{MRR}.
In the case where \(F\) is fully factorial, we 
\cha{obtain the following variation of} Theorem \ref{thBecher2}.

\begin{theorem} \label{thBecher3}
  Let \(F\) be a fully factorial discrete field of characteristic \(\neq 2\).
  Every element of \(\Br_2(F)\) is split by a finite extension of~\(F\) obtained by a tower of quadratic extensions. 
\end{theorem}


\bibliographystyle{myauthordate}
\bibliography{central}

\end{document}